\documentclass[a4paper, 12pt]{amsart}

\usepackage{amsmath,amssymb,enumitem,verbatim,stmaryrd,xcolor,microtype,graphicx,aliascnt}

\usepackage[T1]{fontenc}
\usepackage[utf8]{inputenc}
\usepackage[english]{babel}
\usepackage[top=3.4cm,bottom=3.4cm,left=3.2cm,right=3.2cm]{geometry}
\usepackage[bookmarksdepth=3,linktoc=page,colorlinks,linkcolor={red!80!black},citecolor={red!80!black},urlcolor={blue!80!black}]{hyperref}

\usepackage{tikz}\usetikzlibrary{matrix,arrows,decorations.markings}
\usepackage{tikz-cd}


\usepackage[mathcal]{euscript}                               
\usepackage{mathptmx}                                        
\usepackage{etoolbox}\makeatletter\patchcmd{\@startsection}{\@afterindenttrue}{\@afterindentfalse}{}{}\makeatother    
\patchcmd{\section}{\scshape}{\bfseries}{}{}\makeatletter\renewcommand{\@secnumfont}{\bfseries}\makeatother           
\usepackage[backgroundcolor=yellow,linecolor=yellow,textsize=footnotesize]{todonotes}\setlength{\marginparwidth}{2,5cm} \makeatletter \providecommand \@dotsep{5} \def\listtodoname{List of Todos} \def\listoftodos{\@starttoc{tdo}\listtodoname} \makeatother 

\theoremstyle{plain}
\newtheorem{thm}{Theorem}[section] 
\newaliascnt{lemma}{thm}\newtheorem{lemma}[lemma]{Lemma}\aliascntresetthe{lemma}
\newaliascnt{cor}{thm}\newtheorem{cor}[cor]{Corollary}\aliascntresetthe{cor}
\newaliascnt{prop}{thm}\newtheorem{prop}[prop]{Proposition}\aliascntresetthe{prop}

\newtheorem{thmA}{Theorem} 
\newtheorem{propA}[thmA]{Theorem}

\newtheorem*{thm*}{Theorem}
\newtheorem*{lem*}{Lemma}
\newtheorem*{cor*}{Corollary}

\theoremstyle{definition}
\newaliascnt{df}{thm}\newtheorem{df}[df]{Definition}\aliascntresetthe{df}
\newaliascnt{rem}{thm}\newtheorem{rem}[rem]{Remark}\aliascntresetthe{rem}
\newaliascnt{ex}{thm}\aliascntresetthe{ex}

\newtheorem*{df*}{Definition}
\newtheorem*{ex*}{Example}
\newtheorem*{rem*}{Remark}

\usepackage{etoolbox}
\makeatletter
\patchcmd{\@startsection}{\@afterindenttrue}{\@afterindentfalse}{}{}             
\patchcmd{\part}{\bfseries}{\bfseries\LARGE}{}{}
\patchcmd{\section}{\scshape}{\bfseries}{}{}\renewcommand{\@secnumfont}{\bfseries} 
\patchcmd{\@settitle}{\uppercasenonmath\@title}{\large}{}{}
\patchcmd{\@setauthors}{\MakeUppercase}{}{}{}
\addto{\captionsenglish}{} 
\addto{\captionsenglish}{} 
\addto{\captionsenglish}{} 
\makeatother

\usepackage{fancyhdr}

\pagestyle{fancy}
\fancyhead{}
\fancyfoot{}
\fancyhead[OR,EL]{\footnotesize \thepage}
\fancyhead[OC]{\footnotesize Exterior algebras in matroid theory}
\fancyhead[EC]{\footnotesize Manoel Jarra}
\setlength{\headheight}{12pt}

\DeclareRobustCommand{\gobblefour}[4]{}    
\DeclareSymbolFont{sfoperators}{OT1}{bch}{m}{n} \DeclareSymbolFontAlphabet{\mathsf}{sfoperators} \makeatletter\def\operator@font{\mathgroup\symsfoperators}\makeatother 
\DeclareSymbolFont{cmletters}{OML}{cmm}{m}{it}              
\DeclareSymbolFont{cmsymbols}{OMS}{cmsy}{m}{n}
\DeclareSymbolFont{cmlargesymbols}{OMX}{cmex}{m}{n}
\DeclareMathSymbol{\myjmath}{\mathord}{cmletters}{"7C}     \let\jmath\myjmath 
\DeclareMathSymbol{\myamalg}{\mathbin}{cmsymbols}{"71}     
\DeclareMathSymbol{\mycoprod}{\mathop}{cmlargesymbols}{"60}
\DeclareMathSymbol{\myalpha}{\mathord}{cmletters}{"0B}     \let\alpha\myalpha 
\DeclareMathSymbol{\mybeta}{\mathord}{cmletters}{"0C}      \let\beta\mybeta
\DeclareMathSymbol{\mygamma}{\mathord}{cmletters}{"0D}     \let\gamma\mygamma
\DeclareMathSymbol{\mydelta}{\mathord}{cmletters}{"0E}     \let\delta\mydelta
\DeclareMathSymbol{\myepsilon}{\mathord}{cmletters}{"0F}   \let\epsilon\myepsilon
\DeclareMathSymbol{\myzeta}{\mathord}{cmletters}{"10}      \let\zeta\myzeta
\DeclareMathSymbol{\myeta}{\mathord}{cmletters}{"11}       \let\eta\myeta
\DeclareMathSymbol{\mytheta}{\mathord}{cmletters}{"12}     \let\theta\mytheta
\DeclareMathSymbol{\myiota}{\mathord}{cmletters}{"13}      \let\iota\myiota
\DeclareMathSymbol{\mykappa}{\mathord}{cmletters}{"14}     \let\kappa\mykappa
\DeclareMathSymbol{\mylambda}{\mathord}{cmletters}{"15}    \let\lambda\mylambda
\DeclareMathSymbol{\mymu}{\mathord}{cmletters}{"16}        \let\mu\mymu
\DeclareMathSymbol{\mynu}{\mathord}{cmletters}{"17}        \let\nu\mynu
\DeclareMathSymbol{\myxi}{\mathord}{cmletters}{"18}        \let\xi\myxi
\DeclareMathSymbol{\mypi}{\mathord}{cmletters}{"19}        \let\pi\mypi
\DeclareMathSymbol{\myrho}{\mathord}{cmletters}{"1A}       \let\rho\myrho
\DeclareMathSymbol{\mysigma}{\mathord}{cmletters}{"1B}     \let\sigma\mysigma
\DeclareMathSymbol{\mytau}{\mathord}{cmletters}{"1C}       \let\tau\mytau
\DeclareMathSymbol{\myupsilon}{\mathord}{cmletters}{"1D}   \let\upsilon\myupsilon
\DeclareMathSymbol{\myphi}{\mathord}{cmletters}{"1E}       \let\phi\myphi
\DeclareMathSymbol{\mychi}{\mathord}{cmletters}{"1F}       \let\chi\mychi
\DeclareMathSymbol{\mypsi}{\mathord}{cmletters}{"20}       \let\psi\mypsi
\DeclareMathSymbol{\myomega}{\mathord}{cmletters}{"21}     \let\omega\myomega
\DeclareMathSymbol{\myvarepsilon}{\mathord}{cmletters}{"22}\let\varepsilon\myvarepsilon
\DeclareMathSymbol{\myvartheta}{\mathord}{cmletters}{"23}  \let\vartheta\myvartheta
\DeclareMathSymbol{\myvarpi}{\mathord}{cmletters}{"24}     \let\varpi\myvarpi
\DeclareMathSymbol{\myvarrho}{\mathord}{cmletters}{"25}    \let\varrho\myvarrho
\DeclareMathSymbol{\myvarsigma}{\mathord}{cmletters}{"26}  \let\varsigma\myvarsigma
\DeclareMathSymbol{\myvarphi}{\mathord}{cmletters}{"27}    \let\varphi\myvarphi

\DeclareMathOperator{\Hom}{Hom}

\DeclareMathOperator{\im}{im}

\DeclareMathOperator{\OBlpr}{{OBlpr}}

\DeclareMathOperator{\Rings}{{Rings}}
\DeclareMathOperator{\SRings}{{SRings}}
\DeclareMathOperator{\Mon}{{Mon}}

\newcommand\F{{\mathbb F}}

\renewcommand\int{\textup{int}}

\newcommand\hull{\textup{hull}}

\newcommand\idem{\textup{idem}}

\newcommand\mon{\textup{mon}}

\newcommand\IdempSFields{\textup{IdempSFields}}
\newcommand\Bil{\textup{Bil}}
\newcommand\Mod{\textup{Mod}}
\newcommand{\quot}[2]{#1\!\sslash\!#2}

\renewcommand\geq{\geqslant}
\renewcommand\leq{\leqslant}

\newcommand{\bigbpgenquot}[2]{#1\big\slash\!\!\!\big\slash\!\big\langle\,{#2}\,\big\rangle}

\newcommand\Funpm{{\F_1^\pm}}

\setcounter{tocdepth}{1}   

\title{Exterior algebras in matroid theory}

\author{Manoel Jarra}
\address{\rm Manoel Jarra, University of Groningen, the Netherlands, and IMPA, Rio de Janeiro, Brazil}
\email{{m.zanoelo.jarra@rug.nl}}

\begin{document}

\begin{abstract}
Ordered blueprints are algebraic objects that generalize monoids and ordered semirings, and $\mathbb{F}_1^{\pm}$-algebras are ordered blueprints that have an element $\epsilon$ that acts as $-1$. In this work we introduce an analogue of the exterior algebra for $\mathbb{F}_1^{\pm}$-algebras that provides a new cryptomorphism for matroids. We also show how to recover the usual exterior algebra if the $\mathbb{F}_1^{\pm}$-algebra comes from a ring, and the Giansiracusa Grassmann algebra if the $\mathbb{F}_1^{\pm}$-algebra comes from an idempotent semifield.
\end{abstract}

\maketitle


\begin{small} \tableofcontents \end{small}


\section*{Introduction}
\label{introduction}               

It is a classical theme that $d$-dimensional linear subspaces of the vector space $K^n$ over
a field $K$ correspond to certain elements of the exterior algebra $\Lambda K^{n \choose d}$, which are well-defined
up to scalar multiples in $K^\times$.

The combinatorial counterpart of such linear subspaces are matroids. Baker and Bowler streamline in \cite{Baker-Bowler19} this analogy in a broad sense by the theory of matroids with coefficients in so-called tracts. Fields, semifields and more generally hyperfields, can be seen as examples of tracts. 

Jeffrey and Noah Giansiracusa introduce in \cite{Giansiracusa18} an exterior algebra for idempotent semifields $S$ and exhibit a ‘cryptomorphic’ description of $S$-matroids in terms of the exterior algebra, in a formal analogy to the description of $K$-matroids, or linear subspaces of $K^n$, in the case of a field $K$.

Somewhat puzzling, however, is that Giansiracusas' definition of the exterior algebra for idempotent semifields makes explicit use of the idempotency in the sense that for a free module with basis $\{e_1, \dotsc, e_n\}$, one has that $e_i\otimes e_i = 0$, for $i=1, \dotsc, n$, are the only defining relations, in contrast to the larger set of relations for fields.

In this paper, we give a unified approach to both the classical theory over fields and Giansiracusas’ theory for idempotent semifields, which is based on Lorscheid’s theory of ordered blueprints (cf. \cite{Lorscheid18}). Both fields and idempotent semifields can be realized as ordered blueprints in terms of faithful functors:
\[
(-)^\mon: \Rings \longrightarrow \OBlpr \qquad \text{and} \qquad (-)^\mon: \IdempSFields \longrightarrow \OBlpr
\]
which have respective left inverses
\[
(-)^\hull: \OBlpr \longrightarrow \Rings \qquad \text{and} \qquad (-)^\idem: \OBlpr \longrightarrow \IdempSFields.
\]

We define an exterior algebra for ordered blueprints and show that it recovers both classical exterior algebra over rings as well as Giansiracusas’ exterior algebra over idempotent semifields in terms of these functors. Moreover, we give a cryptomorphic description of matroids over $\F^\pm_1$-algebras, as introduced by Baker and Lorscheid in \cite{Baker-Lorscheid21}, by using elements of the exterior algebra, which recovers the classical viewpoint on linear subspaces of $K^n$ and Giansiracusas’ interpretation of matroids over idempotent semifields. 

\subsection*{Description of results}

Let $B$ be an $\Funpm$-algebra and $n$ be an integer. The exterior algebra $\Lambda B^n=\underset{i \geq 0}{\bigoplus} \Lambda^i B^n$ of $B^n$ is a $B$-module whose underlying semigroup is a (typically non-commutative) $B^+$-algebra. This exterior algebra bears properties analogous to the classical exterior algebra.

\begin{propA}
    \label{propositionA}
 There are $B$-linear isomorphisms of ordered blue $B$-modules
 \[
  B \ \stackrel\sim\longrightarrow \ \Lambda^0  B^n , \qquad B^n \ \stackrel\sim\longrightarrow \ \Lambda^1 B^n , \qquad 0  \ \stackrel\sim\longrightarrow \ \Lambda^k B^n \enspace \text{for} \enspace k\geq n+1,
 \]
 and $(\Lambda B^n)^+$ is generated by $(\Lambda^1 B^n)^+$ as a $B^+$-algebra.
\end{propA}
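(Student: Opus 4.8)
The plan is to realize $\Lambda B^n$ as a graded quotient of the tensor algebra and to exploit the fact that the defining relations are concentrated in degree two, so that the two bottom graded pieces are left untouched. I would write $\Lambda B^n = T_B(B^n)/{\sim}$, where $T_B(B^n) = \bigoplus_{i\geq 0}(B^n)^{\otimes_B i}$ is the tensor algebra of the free ordered blue $B$-module $B^n$ and ${\sim}$ is the congruence of ordered blueprints generated by the exterior relations. By construction $T_B^0(B^n) = B$ and $T_B^1(B^n) = B^n$, and the iterated multiplication maps $(B^n)^{\otimes_B i} \to T_B^i(B^n)$ are surjective, so the tensor algebra is generated in degree one as a $B$-algebra.

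First I would record that the exterior relations are homogeneous of degree two. The point is then to show that the congruence ${\sim}$ they generate consists only of relations supported in degrees $\geq 2$: closing a homogeneous degree-two relation under the ordered-blueprint axioms — reflexivity, transitivity, addition of relations of the same degree, and multiplication by arbitrary homogeneous elements — only raises degrees, and never produces a nontrivial relation in degree zero or one. Granting this, the quotient map is an isomorphism on the degree-zero and degree-one strata, so the canonical maps $B = T_B^0(B^n) \to \Lambda^0 B^n$ and $B^n = T_B^1(B^n) \to \Lambda^1 B^n$ are isomorphisms of ordered blue $B$-modules, which gives the first two assertions.

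Generation by $\Lambda^1 B^n$ is then inherited formally: the composite surjections $(B^n)^{\otimes_B i} \twoheadrightarrow T_B^i(B^n) \twoheadrightarrow \Lambda^i B^n$ exhibit every homogeneous element of $\Lambda B^n$ as a combination of products of elements of $\Lambda^1 B^n = B^n$, so $\Lambda^1 B^n$ generates $\Lambda B^n$ as a $B$-algebra, completing the proof.

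The step I expect to require the most care is the degree-preservation claim for the congruence ${\sim}$. In the classical ring or semiring setting this is automatic, since an ideal generated by homogeneous degree-two elements has no components in lower degree; but in the ordered setting one works with inequalities rather than an ideal, and must check that the generated preorder does not leak a relation downward through the multiplicative structure. I would handle this by an induction on the length of a derivation of a relation in ${\sim}$, verifying at each generating step (transitivity, additive and multiplicative compatibility) that the minimal degree occurring cannot drop below two; the homogeneity of the generators is precisely what makes this induction go through.
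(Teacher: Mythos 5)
Your argument is correct, but it reaches \autoref{propositionA} by a genuinely lighter route than the paper does. The paper's own proof consists of \autoref{basis-epsilon} together with the remark following the construction: it shows that the whole family $\{e_I \mid I\subseteq[n]\}$ is a $B^+$-basis of $\big(\bigbpgenquot{T(B^n)}{\tau_n}\big)^+$, by analyzing chains of elementary moves and comparing coefficients in \emph{every} degree (which forces the bookkeeping with permutation signs and the ``repeated index'' terms $A_1,A_2$), and then reads off the degree-$0$ and degree-$1$ cases. You instead prove only what \autoref{propositionA} needs: since the generators $\tau_n$ relate elements supported in degree $2$, the preorder they generate cannot disturb the degree-$0$ and degree-$1$ strata, so the maps $B\to\Lambda^0 B^n$ and $B^n\to\Lambda^1 B^n$ are isomorphisms and generation in degree one descends from the tensor algebra. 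This buys a cleaner argument with no sign or permutation bookkeeping, and it is in one respect more careful than the paper: the lemma the paper invokes is a statement about the ambient $B^+$-module only, whereas an isomorphism of ordered blue $B$-modules also requires that no inequality ``leaks downward,'' which your plan addresses explicitly. What it does not buy is the full basis property in higher degrees, which the paper re-uses later (notably in \autoref{teo vectors}), so your argument supplements rather than replaces \autoref{basis-epsilon}. One refinement to your key step: the invariant ``the minimal degree occurring cannot drop below two'' is not literally preserved by additive closure, since adding an arbitrary element $c$ to both sides of a relation introduces low-degree terms; the invariant to carry through the induction is rather that the degree-$0$ and degree-$1$ components of the two sides of any derived relation are related by the original componentwise order. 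Equivalently, you can dispense with the induction on derivations altogether: the set of pairs with that property is itself an additive and multiplicative preorder containing $\tau_n$ and $\leq$, hence contains $\langle\tau_n\rangle$ by minimality, and antisymmetry of $\leq_B$ then yields injectivity and order-reflection in degrees $0$ and $1$.
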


This unifies and generalizes the classical and Giansircusas' exterior algebra in the following sense:

\begin{thmA}\hfill
\label{teoB}
 \begin{enumerate}
  \item Let $R$ be a ring and $B=R^\mon$. Then $\Lambda R^n$ is canonically isomorphic to $(\Lambda B^n)^{\hull,+}$ as an $R$-algebra.
  \item Let $S$ be an idempotent semiring and $B=S^\mon$. Then the Giansiracusa exterior algebra $\Lambda S^n$ is  canonically isomorphic to $(\Lambda B^n)^{\idem,+}$ as an $S$-algebra.
 \end{enumerate}
\end{thmA}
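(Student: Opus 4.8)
The plan is to characterise both target algebras by universal properties and to use that the comparison functors are left adjoints, reducing each isomorphism to a matching of presentations by generators and relations.

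First I would record the presentation of $\Lambda B^n$ as the quotient of the tensor $B$-algebra $T(B^n)$ by the congruence generated by the relations $e_i\otimes e_i\=0$ together with the $\epsilon$-twisted commutation relations relating $e_i\otimes e_j$ and $\epsilon\,(e_j\otimes e_i)$, where $e_1,\dotsc,e_n$ is the standard basis of $B^n$. By \autoref{propositionA} this algebra is generated in degree one with degree-one part $B^n$, so a $B$-algebra homomorphism out of $\Lambda B^n$ is determined by the images of the $e_i$. This is what lets me define the comparison maps: on the degree-one piece I invoke the canonical identifications $(B^n)^{\hull,+}\cong R^n$ and $(B^n)^{\idem,+}\cong S^n$, which come from $(-)^\hull$ and $(-)^\idem$ being left inverses to $(-)^\mon$, and I extend multiplicatively.

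Next I would use that $(-)^{\hull,+}$ and $(-)^{\idem,+}$, being the reflectors onto rings and onto idempotent semifields followed by the passage to the associated algebra, are left adjoints and hence preserve all colimits. Since $T(B^n)$ is a free construction and $\Lambda B^n$ is a quotient by a congruence --- both colimits --- each functor sends the presentation of $\Lambda B^n$ to a tensor algebra over $R$ (resp.\ $S$) modulo the images of the defining relations. If these images coincide with the defining relations of the classical, resp.\ Giansiracusa, exterior algebra, then the comparison map is an isomorphism of presented algebras.

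The crux --- and the step I expect to be the main obstacle --- is the behaviour under each functor of the defining relations, which in $\OBlpr$ are \emph{subadditions}, that is, a preorder rather than a symmetric equality. Under $(-)^{\hull,+}$ the hull turns the subadditions into genuine equalities and sends $\epsilon$ to $-1$, so the twisted commutation relation becomes the anticommutativity $e_i\wedge e_j=-\,e_j\wedge e_i$ and $e_i\otimes e_i\=0$ becomes $e_i\wedge e_i=0$; this is precisely the full (``larger'') relation set of the classical exterior algebra, with $e_i\wedge e_i=0$ kept as an independent relation so that the identification remains valid in characteristic two. Under $(-)^{\idem,+}$, by contrast, the target has idempotent addition and no additive inverse, and $(-)^\idem$ is built to forget the sign carried by $\epsilon$; I would verify that the image of the twisted commutation subaddition is then vacuous, holding automatically in the target, so that $e_i\otimes e_i=0$ is the only surviving relation --- exactly Giansiracusa's defining relation. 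The delicate point throughout is to confirm that the congruence generated by the images is neither too large nor too small, which I would secure by a graded comparison of the two sides (in the classical case the expected rank in degree $i$ being $\binom{n}{i}$). With this in hand the comparison map is an isomorphism of presented algebras --- surjectivity from the degree-one generation in \autoref{propositionA} and injectivity from the graded comparison --- giving the asserted canonical isomorphisms of $R$- and $S$-algebras.
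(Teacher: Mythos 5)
Your overall strategy (characterize both sides by presentations and transport them along the comparison functors) is genuinely different from the paper's proof, which never manipulates presentations: the paper proves both parts of \autoref{teoB} by exhibiting the $B^+$-basis $\{e_I\}$ of \autoref{basis-epsilon} on one side and the standard basis on the other, and then checking that the basis-matching map is an algebra morphism. For part (i) your route can in fact be completed, and cleanly: $(-)^{\hull,+}$ really is a reflection, since $\Hom_{\OBlpr}(B,(A,A,=))=\Hom_{\SRings}(B^{\hull,+},A)$ for every semiring $A$, so it converts the universal property of $\bigbpgenquot{T(B^n)}{\tau_n}$ into that of $T(R^n)$ modulo $e_i\otimes e_i=0$ and $e_i\otimes e_j=-e_j\otimes e_i$, which is the characteristic-free presentation of $\Lambda R^n$; your remark about keeping $e_i\wedge e_i=0$ as an independent relation in characteristic two is exactly right. (Note, though, that the ``graded comparison'' you defer to is not contained in \autoref{propositionA}, which only treats degrees $0$ and $1$; what you would need is the full basis statement \autoref{basis-epsilon}.)

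The genuine gap is in part (ii), where the formal backbone of your argument fails. Unlike $(-)^{\hull,+}$, the functor $(-)^{\idem,+}$ is \emph{not} a left adjoint into semirings, and there is no uniform rule for what it does to a quotient by subadditions: it does not turn $a\leq b$ into $a+b=b$, nor into $a=b$. Concretely, let $C$ be the ordered blueprint whose underlying monoid consists of $0$ and the monomials $x^iy^j$, with ambient semiring $\N[x,y]$ and partial order generated by $x\leq y$. The generating relations of $\langle x\leq y,\, 1\equiv 1+1\rangle$ can replace $x$ by $y$ and change positive integer coefficients, but can never reintroduce $x$; from $1.y$ one can only derive the elements $k.y$, so $y\leq x+y$ is not derivable and $[x+y]\neq[y]$ in $C^{\idem,+}$ --- the subaddition $x\leq y$ leaves essentially no trace (indeed $C^{\idem,+}\cong\B[x,y]$). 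In particular no colimit-preservation argument applies to $(-)^{\idem,+}$. For $\mathcal{S}=S^\mon$ the subadditions $\Omega_S$ \emph{do} become equalities in $\mathcal{S}^{\idem,+}$, but only because of their specific bend shape: for $c=a+b$ one has $1.c\leq 1.a+1.b$ and also $1.a\leq 2.c$, $1.b\leq 2.c$ in $\mathcal{S}^+$, whence $1.c\leq 1.a+1.b\leq 4.c\equiv 1.c$ once $1\equiv 1+1$ is imposed. Proving $\mathcal{S}^{\idem,+}\simeq S$ this way is where the paper's proof of \autoref{semifield} spends most of its effort, and your plan cannot avoid it either --- without it you cannot even identify $T(\mathcal{S}^n)^{\idem,+}$ with $T(S^n)$ --- yet your proposal never isolates this step. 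A secondary slip: in $S^\mon$ one has $\epsilon=1.1_S=1$, so the twisted commutation relation does not become vacuous under $(-)^{\idem,+}$; it becomes plain commutativity $e_i\otimes e_j\equiv e_j\otimes e_i$, which is one of Giansiracusa's defining relations $\Psi_n$ for $\bigwedge S^n$ as a quotient of the tensor algebra. Keeping only $e_i\otimes e_i\equiv 0$, as you literally propose, would produce a noncommutative algebra; that relation set is complete only relative to the symmetric-algebra presentation $Sym\, S^n\big/\langle e_i^2\equiv 0\rangle$.
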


Let $[n]=\{1,\dotsc,n\}$ and $\Gamma=\binom{[n]}{d}$ be the family of $d$-subsets of $[n]$. We define a $B$-matroid of rank $d$ on $[n]$ as the $B^\times$-class $[\nu]$ of an element $\nu=(\nu_I)_{I\in \Gamma}$ of $\Lambda^d B^\Gamma$ that satisfies a certain system of relations (see \ref{df B-matroid}) and such that $\nu_I\in B^\times$ for some $I\in\Gamma$. This recovers the aforementioned concepts of matroids in the following sense:

\begin{thmA}
 \label{teoC}
 Consider $0\leq d\leq n$.
 \begin{enumerate}
  \item Let $K$ be a field and $B=K^\mon$. Then the isomorphism $\Lambda K^{\binom{[n]}{d}}\simeq(\Lambda B^{\binom{[n]}{d}})^{\hull,+}$ induces a bijection between $K$-matroids (as in \cite{Baker-Bowler19}) and $B$-matroids.
  \item Let $S$ be an idempotent semifield and $B=S^\mon$. Then the isomorphism $\Lambda S^{\binom{[n]}{d}}\simeq(\Lambda B^{\binom{[n]}{d}})^{\idem,+}$ induces a bijection between tropical Pl\"ucker vectors (as in \cite{Giansiracusa18}) and Grassmann-Pl\"ucker functions with coefficients in $B$ in the sense of this text.
  \item Let $B$ be an ordered blueprint. Then there is a canonical bijection between $B$-matroids in the sense of \cite{Baker-Lorscheid21} and classes of $B$-Pl\"ucker vectors (as defined in \ref{df Plucker vectors}).
 \end{enumerate}
\end{thmA}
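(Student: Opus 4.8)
The plan is to isolate a single \emph{unwinding lemma} that converts the intrinsic exterior-algebra equation of \ref{df B-matroid} into the coordinatewise Grassmann--Pl\"ucker relations, and then to run all three parts through it. Fix the standard basis $(e_I)_{I\in\Gamma}$ of $B^\Gamma$ and the induced basis of the graded piece of $\Lambda B^\Gamma$ in which the defining equation lives. The lemma to prove is that, after expanding in this basis, the equation of \ref{df B-matroid} is equivalent to the family of Grassmann--Pl\"ucker relations indexed by pairs $(S,T)$ with $S\in\binom{[n]}{d-1}$ and $T\in\binom{[n]}{d+1}$, the signs being supplied by the distinguished element $\epsilon$ of the $\Funpm$-algebra $B$. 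This is the exact analogue of the classical equivalence ``decomposable $\Longleftrightarrow$ Pl\"ucker relations'', the one new feature being that over the ordered blueprint $B$ each such equation is a relation $\leq$ in the ambient semiring rather than an honest equality, so the expansion must be performed at the level of the partial order and its congruence, not merely of underlying elements.

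For part (iii) I would compare the unwound system directly with Baker--Lorscheid. By definition their $B$-matroids are $B^\times$-classes of nonzero Grassmann--Pl\"ucker functions $\nu\colon\Gamma\to B$ satisfying exactly these relations in $B$; the unwinding lemma therefore sets up a bijection between $B$-Pl\"ucker vectors in the sense of \ref{df Plucker vectors} and Baker--Lorscheid Grassmann--Pl\"ucker functions. It then remains to check that the two normalization conditions coincide (some coordinate lying in $B^\times$ versus the function being nonzero) and that the scaling actions of $B^\times$ are identified, so that the bijection descends to classes. One must also confirm that the exterior-algebra equation of \ref{df B-matroid} reproduces the same variant (weak versus strong) of the Grassmann--Pl\"ucker axioms that Baker--Lorscheid impose; this is the point at which the precise shape of that equation matters.

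Parts (i) and (ii) I would then obtain by transporting the unwound relations along the isomorphisms of \autoref{teoB} rather than re-deriving them. In case (i), with $B=K^\mon$, \autoref{teoB}(1) identifies $\Lambda K^\Gamma$ with $(\Lambda B^\Gamma)^{\hull,+}$; since $(-)^\hull$ collapses the relation $\leq$ to an equality and adjoins additive inverses, it carries each relation of the unwound system to the corresponding classical Pl\"ucker relation with values in $K$. As the $K$-matroids of \cite{Baker-Bowler19} are precisely the $K^\times$-classes of Grassmann--Pl\"ucker functions satisfying those classical relations and $B^\times=K^\times$, the lemma yields the stated bijection. Case (ii) is formally the same with $(-)^\hull$ replaced by $(-)^\idem$ and \autoref{teoB}(2): over the idempotent semifield the functor turns each relation into the tropical condition that the minimum of the corresponding expression be attained at least twice, which is Giansiracusa's defining condition for a tropical Pl\"ucker vector in \cite{Giansiracusa18}.

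The main obstacle is the unwinding lemma itself in the ordered-blueprint setting. Over a field the passage from the intrinsic equation to the quadratic Pl\"ucker relations is standard linear algebra, but here I must carry out the expansion while respecting the partial order $\leq$ and the sign bookkeeping governed by $\epsilon$, and verify that no relation is lost or spuriously introduced when one argues modulo the ambient semiring congruence instead of with genuine equalities. The secondary subtleties, all needed for the bijections to descend to $B^\times$-classes, are the weak-versus-strong matching flagged above and the verification that the support condition and the $B^\times$-action are preserved by each of $(-)^\hull$ and $(-)^\idem$; the latter rests on these functors being left inverse to $(-)^\mon$, hence unit-preserving.
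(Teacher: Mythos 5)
Your proposal follows essentially the same route as the paper: your ``unwinding lemma'' is precisely the paper's Lemma~\ref{basis-epsilon} (the $e_I$ form a $B^+$-basis, proved at the level of the generated preorder) combined with Theorem~\ref{teo vectors}, part (iii) is then the tautological passage to $B^\times$-classes against Baker--Lorscheid's definition, and parts (i)--(ii) are obtained by transporting the Grassmann--Pl\"ucker functions along $(-)^{\hull,+}$, respectively by matching the relations over $S^\mon$ with Giansiracusa's condition. The only notable nuance is that the paper absorbs the unwinding into Definition~\ref{df Plucker vectors} (which is stated in coordinates from the start), and in part (ii) the equivalence of conditions comes from the explicit generating set $\Omega_S$ in the definition of $S^\mon$ rather than from the functor $(-)^{\idem,+}$ itself, which is where your transport argument would in practice have to look.
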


\subsection*{Remark}
We draw the reader's attention to the fact that the functors $(-)^\mon$, $(-)^{\hull,+}$ and $(-)^{\idem,+}$ play the same role as in Lorscheid's approach to tropicalization as a base change from a field to the tropical hyperfield in \cite{Lorscheid19}. This indicates that our theory is part of a larger picture that puts classical theory and idempotent analysis on an equal footing.


\subsection*{Acknowledgements}
The author thanks Oliver Lorscheid for useful conversations and for his help with preparing this text. The present work was carried out with the support of CNPq, National Council for Scientific and Technological Development - Brazil.


\section{Algebraic background}


In this section, we review some background around $\mathbb{F}_1^\pm$-algebras, following \cite{Lorscheid18}. 

If $\tau$ is a preorder on a set $X$, viewed as a subset of $X\times X$, we will use $x\leq_\tau y$ to denote that $(x,y)$ is in $\tau$ and $a\equiv_\tau b$ to denote that both $(a,b)$ and $(b, a)$ are in $\tau$. Note that $\equiv_\tau$ is an equivalence relation. If the context is clear, we will denote $\leq_\tau$ simply by $\leq$.


\subsection{Monoids and $M$-sets}

A \emph{monoid} is a unital semigroup. A monoid $M$ is called \emph{pointed} if it has an \emph{absorbing element}, \emph{i.e.}, an element $0$ such that $0. m = 0 = m.0$ for all $m$ in $M$. The neutral and the absorbing element (if they exist) are always unique. For the rest of this text, unless otherwise stated, every monoid is supposed to be commutative.

A \emph{submonoid} of a monoid $M$ is monoid $N$ that is a subset of $M$, contains $1_M$ and whose operation $\cdot_N$ is the restriction of $\cdot_M$. If $M$ is pointed and its absorbing element is in $N$, we say that $N$ is a pointed submonoid of $M$.

If $M$ and $W$ are monoids, a map $f:M \to W$ is called a \emph{morphism of monoids} if $f(1_M)=1_W$ and $f(x). f(y)=f(x y)$ for all $x, y$ in $M$. If $M$ and $W$ are pointed and $f$ carries the absorbing element of $M$ to the absorbing element of $W$, we say that $f$ is a \emph{morphism of pointed monoids}. The category of pointed monoids will be denoted by $\Mon_*$.

A preorder $\mathfrak{r}$ on $M$ is called \emph{multiplicative} (or \emph{additive}, depending on the operation of $M$) if, for all elements $m$, $n$ and $x$ in $M$, one has $mx\leq_\mathfrak{r} nx$ whenever $m\leq_\mathfrak{r} n$. A \emph{congruence} is a multiplicative preorder that is symmetric (thus an equivalence relation). If $\mathfrak{r}$ is a multiplicative preorder on $M$, the set  $\mathfrak{c}_\mathfrak{r}:=\{(m, n)\in M\times M|\, m\equiv_\mathfrak{r} n\}$ is a congruence. The quotient set $M/\mathfrak{c}_\mathfrak{r}$ is a monoid, with operation $[m].[n]=[m. n]$, and has an induced multiplicative partial order $\overline{\mathfrak{r}}:=\{([a], [b]) |\, a\leq_\mathfrak{r} b\}$.

If $M$ is a monoid, an \emph{$M$-set} is a set $X$ equipped with a map 
\[
\begin{array}{ccc}
       M\times X & \longrightarrow  & X\\
        (m, x)   & \longmapsto      & m. x
\end{array}
\]
that satisfies:
\[
\;\quad (i)\; (m. n). x=m.(n. x) \;\quad \text{and} \;\quad (ii)\; 1. x = x,
\]
for all $m, n$ in $M$ and $x$ in $X$. An \emph{$M$-subset} of $X$ is an $M$-set $Z$ such that $Z$ is a subset of $X$ and whose map $M\times Z\to Z$ is the restriction of the map $M\times X\to X$.

If $M$ is a pointed monoid with absorbing element $0$, a \emph{pointed $M$-set} is a pointed set $(X,p)$ equipped with a map $M\times X\to X$ that makes $X$ an $M$-set and satisfies: 
\[
\;\quad (iii)\; 0. x=p \;\quad \text{and} \;\quad (iv)\; m. p=p,
\]
for all $m$ in $M$ and $x$ in $X$. A \emph{pointed $M$-subset} of $(X,p)$ is a pointed $M$-set $(Z,q)$ such that $Z$ is an $M$-subset of $X$ and $p=q$.

If $\{(X_i, p_i)|\, i \in I\}$ is a family of pointed $M$-sets, its \emph{coproduct} is given by $\underset{i\in I}{\bigvee} X_i:= \bigg(\underset{i\in I}{\bigsqcup} X_i\bigg)\big/\sim$, where $\sim
:=\{(p_i, p_j)|\, i, j \in I\}$, equipped with the $M$-action
\[
\begin{array}{ccc}
  M\times \bigvee X_i & \longrightarrow & \bigvee X_i\\
  (m, [x_j]) & \longmapsto & [m. x_j]
\end{array}
\]
where $[y_j]$ denotes the class, in $\underset{i\in I}{\bigvee} X_i$, of $y_j\in X_j$.

If $X_1$, $X_2$ and $W$ are $M$-sets, a map $\varphi: X_1\times X_2 \to W$ is called \emph{$M$-bilinear} if
\[
\varphi(-, x_2): X_1 \to W \qquad \text{and} \qquad \varphi(x_1, -): X_2 \to W
\]
are morphims of $M$-sets for all $x_1$ in $X_1$ and $x_2$ in $X_2$. We denote the set of $M$-bilinear maps $X_1\times X_2 \to W$ by $\Bil_M(X_1\times X_2, W)$.

We construct the \emph{tensor product $X_1\otimes_{M} X_2$ of $X_1$ and $X_2$} as the quotient of $X_1\times X_2$ by the equivalence relation generated by $\{(m. x_1, x_2)\sim (x_1, m. x_2)|\; x_i\in X_i \text{ and } m\in M\}$, and denote the class of $(x_1,x_2)$ by $x_1\otimes x_2$. 

The map 
\[
\begin{array}{ccc}
       M\times (X_1\otimes_{M} X_2) & \longrightarrow  & X_1\otimes_{M} X_2\\
        (m, x_1\otimes x_2)   & \longmapsto      & (m. x_1)\otimes x_2=x_1\otimes (m. x_2)
\end{array}
\]
turns $X_1\otimes_{M} X_2$ into an $M$-set.

If $\varphi: X_1\times X_2 \to W$ is an $M$-bilinear map, then 
\[
\begin{array}{cccc}
       \overline{\varphi}: & X_1\otimes_{M} X_2 & \longrightarrow  & W\\
         & x_1\otimes x_2  & \longmapsto      & \varphi(x_1, x_2)
\end{array}
\]
defines a morphism of $M$-sets.

The universal property of $X_1\otimes_{M} X_2$ can be expressed as the fact that $\varphi\mapsto\overline{\varphi}$ is a bijection from $\mbox{Bil}_M(X_1\times X_2, W)$ to $\Hom_M(X_1\otimes_M X_2, W)$.


\subsection{Semirings and modules}

A \emph{semiring} is a triple $(S,+,\cdot)$, where $(S, +)$ is a commutative monoid with identity $0$, $(S, \cdot)$ is a (non-necessarily commutative) pointed monoid with identity $1$ and absorbing element $0$, and satisfying $(a+b). c = (a. c) + (b. c)$ and $c. (a+b)=(c. a)+(c. b)$ for all $a, b$ and $c$ in $S$. We also use $ab$ to denote $a. b$. A semiring is called \emph{commutative} if $(S, \cdot)$ is commutative. If the operations are clear, we use $S$ to denote the semiring $(S, +, \cdot)$. For the rest of this text, unless otherwise stated, every semiring is supposed to be commutative.

If $S_1$ and $S_2$ are semirings, a map $f: S_1 \to S_2$ is called a \emph{morphism of semirings} if it is, at the same time, a morphism of the underlying monoids $f: (S_1, +) \to (S_2, +)$ and $f: (S_1, \cdot) \to (S_2, \cdot)$. The category of semirings will be denoted by $\SRings$.

A preorder $\mathfrak{r}$ on a semiring $S$ is called \emph{additive and multiplicative} if $a+z\leq_\mathfrak{r} b+z$ and $az\leq_\mathfrak{r} bz$ whenever $(a, b)$ is in $\mathfrak{r}$ and $z$ is in $S$. 

An \emph{ordered semiring} is a pair $(S, \leq)$, where $S$ is a semiring and $\leq$ is an additive and multiplicative partial order on $S$. A \emph{morphism of ordered semirings} is a morphism of semirings that is order-preserving.

A \emph{congruence} on a semiring $S$ is an additive and multiplicative preorder that is symmetric (thus an equivalence relation). If $\mathfrak{c}$ is a congruence on a semiring $S$, the quotient set $S/\mathfrak{c}$ is naturally a semiring, with operations defined by $[a]+[b]:=[a+b]$ and $[a].[b]:=[ab]$.

If $\mathfrak{r}$ is an additive and multiplicative preorder on a semiring $S$, the set  $\mathfrak{c}_\mathfrak{r}:=\{(a, b)\in S\times S|\, a\equiv_\mathfrak{r} b\}$ is a congruence and the quotient semiring $S/\mathfrak{c}_\mathfrak{r}$ has an induced additive and multiplicative partial order $\overline{\mathfrak{r}}:=\{([a], [b]) |\, a\leq_\mathfrak{r} b\}$.

An \emph{$S$-module} is a pointed monoid $(Y, +)$ with identity $0$ equipped with a map
\[
\begin{array}{cccc}
       \lambda: & S\times Y & \longrightarrow  & Y\\
        &(s, y)   & \longmapsto      & s. y
\end{array}
\]
that makes $Y$ an $(S,\cdot)$-set and satisfies 
\[
(s+r). y = (s. y) + (r. y) \;\quad\text{and}\;\quad s.(y+z) = (s. y)+(s. z)
\]
for all $s,r$ in $S$ and $y,z$ in $Y$. A map $f: Y_1\to Y_2$ is called a \emph{morphism of $S$-modules} if $f(y+z) = f(y) + f(z)$ and $f(s. y) = s. f(y)$, for all $y,z$ in $Y_1$ and $s$ in $S$.

If $Y_1$, $Y_2$ and $Z$ are $S$-modules, a map $\varphi: Y_1\times Y_2 \to Z$ is called \emph{$S$-bilinear} if
\[
\varphi(-, y_2): Y_1 \to Z \qquad \text{and} \qquad \varphi(y_1, -): Y_2 \to Z
\]
are morphims of $S$-modules for all $y_1$ in $Y_1$ and $y_2$ in $Y_2$. We denote the set of $S$-bilinear maps $Y_1\times Y_2 \to Z$ by $\Bil_S(Y_1\times Y_2, Z)$.

We construct the \emph{tensor product $Y_1\otimes_{S} Y_2$} as the quotient of $S^{Y_1\times Y_2}$ by the congruence generated by $\{e_{(s. y_1, y_2)}\equiv s. e_{(y_1, y_2)}\equiv e_{(y_1, s. y_2)}|\; s\in S \text{ and } y_i\in Y_i\}$ (where $e_{(h_1, h_2)}$ is the element of $S^{Y_1\times Y_2}$ that has $1$ in the entry correponding to $(h_1, h_2)$ and $0$ on the others). We denote the class of $e_{(y_1,y_2)}$ by $y_1\otimes y_2$. 

If $\varphi: Y_1\times Y_2 \to Z$ is an $S$-bilinear map, one has a morphism of $S$-modules characterized by
\[
\begin{array}{cccc}
       \overline{\varphi}: & Y_1\otimes_{S} Y_2 & \longrightarrow  & Z\\
         & y_1\otimes y_2  & \longmapsto      & \varphi(y_1, y_2).
\end{array}
\]
The association $\varphi\mapsto\overline{\varphi}$ is a bijection from $\Bil_S(Y_1\times Y_2, Z)$ to $\Hom_S(Y_1\otimes_S Y_2, Z)$. For a number $n\geq 1$ and an $S$-module $Y$, we will denote the $n$-fold tensor product $Y\otimes\dotsc\otimes Y$ by $Y^{\otimes n}$, and define $Y^{\otimes 0}$ as $S$.

For the $S$-module $TY:=\underset{l=0}{\overset{\infty}{\bigoplus}}(Y)^{\otimes l}$, the map 
\[
(y_1\otimes\dotsc \otimes y_i, z_1\otimes\dotsc \otimes z_j) \longmapsto y_1\otimes\dotsc \otimes y_i\otimes z_1\otimes\dotsc \otimes z_j
\]
extends, by linearity, to a product of $TY$ that makes it a (typically non-commutative) $S$-algebra.


\subsection{Ordered blueprints} 

An \emph{ordered blueprint} is a triple $(B^\bullet, B^+, \leq)$ such that:
\begin{enumerate}
 \item $(B^+, \leq)$ is an ordered semiring;
 \item $B^\bullet$ is a pointed submonoid of $(B^+, \cdot)$;
 \item $B^\bullet$ generates $B^+$ as a semiring, \emph{i.e.}, every element of $B^+$ is a finite sum of elements in $B^\bullet$.
\end{enumerate}

We call $B^\bullet$ the \emph{underlying monoid} and $B^+$ the \emph{ambient semiring} of $B$, and use $B^\times$ to denote the set of invertible elements of $B^\bullet$. A \emph{morphism of ordered blueprints $\varphi: B \to C$} is an order-preserving morphism $\varphi: B^+ \to C^+$ of semirings that satisfies $\varphi(B^\bullet)\subseteq C^\bullet$.

If $B$ is an ordered blueprint and $\mathfrak{r}$ is an additive and multiplicative preorder on $B^+$ containing $\leq$, one has the \emph{quotient ordered blueprint} $B\quot\mathfrak{r}:=\{\overline{B^\bullet}, B^+/\mathfrak{c}_\mathfrak{r}, \overline{\mathfrak{r}}\}$, where $\mathfrak{c}_\mathfrak{r}$ and $\overline{\mathfrak{r}}$ are the congruence and the partial order induced by $\mathfrak{r}$, respectively, and $\overline{B^\bullet}$ is the multiplicative submonoid of $B^+/\mathfrak{c}_\mathfrak{r}$ whose elements are the classes of elements in $B^\bullet$.

For a subset $H$ of $B^+\times B^+$, we define the preorder generated by $H$ as
\[
  \langle H \rangle:=\bigcap\{ \text{additive and multiplicative preorders on } B^+ \text{ containing } H \text{ and }\leq\}.
\]

If $H=\{(a_i, b_i)|\, i\in I\}$, we write $\langle a_i\leq b_i|\, i\in I\rangle$ to denote $\langle H \rangle$, and $\langle a_i\equiv b_i|\, i\in I\rangle$ to denote $\langle a_i\leq b_i \mbox{ and } b_i\leq a_i|\, i\in I\rangle$.  

For an ordered blueprint $B$, let $\mathfrak{f}:=\langle a\equiv b|\, a\leq b\rangle$ and define the \emph{algebraic hull} of $B$ as the ordered blueprint $B^{\textup{hull}}:=B\big\slash\!\!\!\big\slash\mathfrak{f}$. Note that the partial order of $B^{\textup{hull}}$ is trivial.

If $(D, \cdot)$ is a pointed monoid with absorbing element $0_D$, one has the \emph{semiring-algebra} $\widetilde{D} := \mathbb{N}[D]\big\slash\!\langle 0 \equiv 1. 0_D \rangle$, whose elements can be seen as formal finite sums of non-zero elements of $D$, with operations 
\[
\begin{array}{ccc}
\sum n_d. d + \sum m_d. d & = & \sum (n_d+m_d). d\\
\big(\sum n_d. d\big) \big(\sum m_d. d\big) & = &
\underset{d\in D\backslash\{0_D\}}{\sum} \bigg( \underset {a. b = d}{\sum} n_a m_b\bigg) d. 
\end{array}
\]
If $H\subseteq \widetilde{D}\times \widetilde{D}$, we use $\bigbpgenquot{D}{H}$ to denote the ordered blueprint $\bigbpgenquot{(D, \widetilde{D}, =)}{H}$.

There are two canonical functors $(-)^\bullet$ and $(-)^+$ that sends an ordered blueprint to its underlying monoid and ambient semiring, respectively. 


\subsection{Ordered blue modules}

Let $B=(B^\bullet, B^+, \leq_B)$ be an ordered blueprint. An \emph{ordered blue $B$-module}, or simply \emph{$B$-module}, is a triple $M=(M^\bullet,M^+,\leq_M)$ such that:
\begin{enumerate}
 \item $M^+$ is a $B^+$-module;
 \item $\leq_M$ is an additive partial order on $M^+$  such that $(b_1. m_1)\leq_M (b_2. m_2)$ whenever $b_1\leq_B b_2$ and $m_1\leq_M m_2$;
 \item $M^\bullet$ is a pointed $B^\bullet$-subset of $M^+$, where the map $B^\bullet\times M^+\to M^+$ is the restriction of $B^+\times M^+\to M^+$;
 \item $M^\bullet$ generates $M^+$ as a semigroup, \emph{i.e.}, every element of $M^+$ is a finite sum of elements in $M^\bullet$.
\end{enumerate}

We call $M^\bullet$ the \emph{underlying $B^\bullet$-set} and $M^+$ the \emph{ambient $B^+$-module} of $M$. A \emph{morphism of $B$-modules $f:M \to N$} is an order-preserving morphism $f: M^+ \to N^+$ of $B^+$-modules such that $f(M^\bullet)\subseteq N^\bullet$. The category of $B$-modules will be denoted by $B\text{-}\Mod$. 

An \emph{additive $B$-preorder} on $M$ is an additive preorder $\mathfrak{r}$ on the monoid $M^+$ that contains $\leq_M$ and satisfies $b_1. m_1 \leq_\mathfrak{r} b_2. m_2$ whenever $b_1\leq_B b_2$ and $m_1\leq_\mathfrak{r} m_2$. A \emph{congruence} is a $B$-preorder that is symmetric. If $\mathfrak{r}$ is a $B$-preorder on $M$, the set $\mathfrak{c}_\mathfrak{r}:=\{(m, n)|\, m\equiv_\mathfrak{r} n\}$ is a congruence. In this case, we have the \emph{quotient $B$-module} $M/\!\!\!/\mathfrak{r}:=(\overline{M^\bullet}, M^+/\mathfrak{c}_\mathfrak{r}, \overline{\mathfrak{r}})$, where $M^+/\mathfrak{c}_\mathfrak{r}$ is the quotient $B^+$-module, $\overline{M^\bullet}:=\{[m]|\, m\in M^\bullet\}$ and $\overline{\mathfrak{r}}$ is the partial order induced by $\mathfrak{r}$. For a subset $L$ of $M^+\times M^+$, we define the preorder generated by $L$ as
\[
  \langle L \rangle:=\bigcap\{ \text{$B$-preorders on } M^+ \text{ containing } L \text{ and }\leq_M\}.
\]
If $M$ is a $B$-module, let $\mathfrak{g}:=\langle(x,y)\in M^+\times M^+|$ $x\leq_M y \text{ or } y\leq_M x\rangle$. We define the \emph{algebraic hull} of $M$ as the $B$-module $M^{\textup{hull}}:=M/\!\!\!/\mathfrak{g}$. Note that the partial order of $M^{\textup{hull}}$ is trivial.

There are two natural functors $(-)^\bullet$ and $(-)^+$ that send an $B$-module to its underlying $B^\bullet$-set and ambient $B^+$-module, respectively. 

The coproduct of a family $\{M_i=(M_i^\bullet, M_i^+, \leq_i)|\, i \in I\}$ of $B$-modules is given by $M = (M^\bullet, M^+, \leq_M)$, where $M^+=\underset{i\in I}{\bigoplus} M_i^+$, $\leq_M = \langle (m_i)_{i\in I} \leq (n_i)_{i\in I}|\, m_j\leq_j n_j, \, \forall j\in I \rangle$ and $M^\bullet$ is the image of the natural map of $B^\bullet$-sets
\[
\Gamma: \underset{i\in I}{\bigvee} M_i^\bullet \longrightarrow \underset{i\in I}{\bigoplus} M_i^+.
\]


\subsection{Tensor product of $B$-modules}

A \emph{$B$-bilinear map $\varphi: M_1\times M_2 \to N$} is a $B^+$-bilinear map $\varphi: M_1^+\times M_2^+ \to N^+$ such that
\[
\varphi(-, m_2): M_1 \to N \qquad \text{and} \qquad \varphi(m_1, -): M_2 \to N
\]
are morphims of $B$-modules, for all $m_1$ in $M_1^\bullet$ and $m_2$ in $M_2^\bullet$.

One has the natural map of $B^\bullet$-sets 
\[
  \begin{array}{cccc}
   \psi: & M_1^\bullet\otimes_{B^\bullet}M_2^\bullet & \longrightarrow &  M_1^+\otimes_{B^+}M_2^+ \\
    & m_1\otimes m_2& \longmapsto & m_1\otimes m_2.
  \end{array}
 \]
The \emph{tensor product of $M_1$ and $M_2$} is defined as the $B$-module 
\[
M_1\otimes_{B}M_2:=(im\, \psi, M_1^+\otimes_{B^+}M_2^+, =)/\!\!\!/\mathfrak{r},
\]
where $\mathfrak{r}:=\langle x_1\otimes y_1\leq x_2\otimes y_2|\; x_1\leq x_2$ in $M_1^+$ and $y_1\leq y_2 \in M_2^+\rangle$. For $\alpha \in M_1^+\otimes_{B^+}M_2^+$, we denote its class in $(M_1\otimes_{B}M_2)^+$ again by $\alpha$.

Let $\varphi: M_1\times M_2 \to N$ be a $B$-bilinear map. As $\varphi$ is $B^+$-bilinear, there exists a morphism
\[
  \begin{array}{cccc}
   \Tilde{\varphi}: & M_1^+\otimes_{B^+}M_2^+ & \longrightarrow & N^+ \\
    & m_1\otimes m_2& \longmapsto & \varphi(m_1, m_2) 
  \end{array}
 \]
of $B^+$-modules.

By the definition of $B$-bilinearity, $\Tilde{\varphi}(\im \hspace{1mm} \psi)\subseteq N^\bullet$ and $\Tilde{\varphi}(x_1\otimes y_1)\leq \Tilde{\varphi}(x_2\otimes y_2)$ for every relation $x_1\otimes y_1\leq x_2\otimes y_2$ in $M_1^+\otimes_{B^+}M_2^+$. Thus there exists a morphism
\[
  \begin{array}{cccc}
   \overline{\varphi}: & M_1\otimes_{B}M_2 & \longrightarrow & N \\
    & m_1\otimes m_2 & \longmapsto & \varphi(m_1, m_2) 
  \end{array}
 \]
of $B$-modules.

\begin{prop}
The map
\[
  \begin{array}{cccc}
   \Phi: & \Bil _B(M_1\times M_2, N) & \longrightarrow & \Hom      _B(M_1\otimes M_2, N) \\
    & \varphi& \longmapsto & \overline{\varphi} 
  \end{array}
 \]
 is a bijection.
\end{prop}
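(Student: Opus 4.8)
The plan is to produce a two-sided inverse to $\Phi$, relying on the universal property of the tensor product $\otimes_{B^+}$ of $B^+$-modules established in the subsection on semirings and modules, together with the explicit description of $M_1\otimes_B M_2$ as the quotient $B$-module $(\im\psi,\, M_1^+\otimes_{B^+}M_2^+,\, =)/\!\!\!/\mathfrak{r}$. Throughout I use that the assignment $\varphi\mapsto\overline\varphi$ producing $\Phi$, and the identity $\overline\varphi(m_1\otimes m_2)=\varphi(m_1,m_2)$, have already been constructed in the excerpt.

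For injectivity, I would observe that a $B$-bilinear map is in particular $B^+$-bilinear, hence additive in each variable; since each $M_i^+$ is generated as a semigroup by $M_i^\bullet$, such a map is determined by its restriction to $M_1^\bullet\times M_2^\bullet$. Because $\overline\varphi(m_1\otimes m_2)=\varphi(m_1,m_2)$, the morphism $\overline\varphi$ recovers $\varphi$ on $M_1^\bullet\times M_2^\bullet$ and therefore everywhere. Thus $\overline\varphi=\overline{\varphi'}$ forces $\varphi=\varphi'$.

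For surjectivity, I would start from $f\in\Hom_B(M_1\otimes_B M_2, N)$ and precompose its ambient map $f^+$ with the quotient $q\colon M_1^+\otimes_{B^+}M_2^+\to (M_1\otimes_B M_2)^+$. The composite $f^+\circ q$ is a morphism of $B^+$-modules, so the universal property of $\otimes_{B^+}$ yields a $B^+$-bilinear map $\varphi\colon M_1^+\times M_2^+\to N^+$ with $\varphi(m_1,m_2)=f(m_1\otimes m_2)$. The substantive step is to promote $\varphi$ to a $B$-bilinear map, i.e.\ to verify that $\varphi(-,m_2)$ and $\varphi(m_1,-)$ are morphisms of $B$-modules for $m_1\in M_1^\bullet$ and $m_2\in M_2^\bullet$. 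Preservation of underlying sets is immediate: $m_1\otimes m_2$ lies in $\im\psi$, hence in $(M_1\otimes_B M_2)^\bullet$, and $f$ carries underlying sets into $N^\bullet$. Once $\varphi$ is shown to be $B$-bilinear, the identity $\overline\varphi=f$ follows because both are $B$-module morphisms agreeing on the pure tensors $m_1\otimes m_2$, which generate $(M_1\otimes_B M_2)^+$ additively.

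The step I expect to be the main obstacle is the order-preservation demanded by $B$-bilinearity, and this is exactly where the generators of $\mathfrak{r}$ enter. Given $x_1\leq x_2$ in $M_1^+$, the relation $x_1\otimes m_2\leq x_2\otimes m_2$ is among the defining relations of $\mathfrak{r}$ (taking $m_2\leq m_2$ reflexively), so applying the order-preserving map $f^+$ gives $\varphi(x_1,m_2)\leq\varphi(x_2,m_2)$; the second variable is symmetric. The only point requiring care is that $\mathfrak{r}$ is generated precisely by relations of the form $x_1\otimes y_1\leq x_2\otimes y_2$ with $x_1\leq x_2$ and $y_1\leq y_2$, so that no further relations must be analyzed; with this confirmed, the remaining checks are routine unwindings of the definitions.
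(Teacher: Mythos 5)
Your proof is correct and follows essentially the same route as the paper: injectivity by evaluating $\overline\varphi$ on pure tensors, and surjectivity by pulling a morphism $f$ back to the map $(m_1,m_2)\mapsto f(m_1\otimes m_2)$ and checking it is $B$-bilinear (your version just spells out the checks the paper compresses into ``by the construction of the tensor product''). One small note: your closing worry is unnecessary --- your order-preservation argument only needs the generating relations $x_1\otimes m_2\leq x_2\otimes m_2$ to \emph{lie in} $\mathfrak{r}$, and any further relations in $\mathfrak{r}$ could not harm surjectivity; they would only be relevant to the well-definedness of $\Phi$ itself, which is established before the proposition.
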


\begin{proof}
Let $\varphi$ and $\theta$ two $B$-bilinear maps such that $\overline{\varphi}=\overline{\theta}$. Thus, in particular, $\varphi(m_1, m_2)=\overline{\varphi}(m_1\otimes m_1)=\overline{\theta}(m_1\otimes m_1)=\theta(m_1, m_2)$, for all $m_1\in M_1$ and $m_2\in M_2$. With this, we obtain the injectivity of $\Phi$.

Let $\zeta: M_1\otimes_B M_2 \to N$ be a morphism of $B$-modules. Define:
\[
  \begin{array}{cccc}
   \Tilde{\zeta}: & M_1^+\times M_2^+ & \longrightarrow & N^+ \\
    & (m_1, m_2) & \longmapsto & \zeta(m_1\otimes m_2)
  \end{array}
 \]

By the construction of the tensor product and from the fact that $\zeta$ is a morphism, one has that $\Tilde{\zeta}$ is $B$-bilinear. As $\Phi(\Tilde{\zeta})=\zeta$, one has that $\Phi$ is surjective.
\end{proof}


\section{Exterior algebra}

\subsection{Notations}

For a blueprint $B$, a $B$-module $M$, a natural number $n$ and a set $I$, we will use the following notations: $\{e_i|\, i \in [n]\}\subseteq (B^\bullet)^n$ for the canonical basis; $M^{\otimes n}$ for the $n$-fold tensor product $M\otimes \dotsc \otimes M$; and $M^n$ for the product
\[
\underset{i=1}{\overset{n}{\prod}}M=\bigg(\underset{i=1}{\overset{n}{\prod}}M^\bullet, \underset{i=1}{\overset{n}{\prod}}M^+, \leq_{M^n} \bigg),
\]
where $\leq_{M^n}$ is defined by $(\alpha_1, \dotsc, \alpha _n)\leq_{M^n} (\beta_1, \dotsc, \beta _n)$ if $\alpha_j\leq \beta_j$ for all $j$ in $[n]$.

\begin{df}
Let $\mathbb{F}_1^{\pm}:=(\{0, 1, \epsilon\}, \mathbb{N}\oplus \mathbb{N}. \epsilon, \langle 0\leq 1+\epsilon \rangle)$, where $\epsilon^2=1$.
\end{df}

For the rest of this text, fix an $\mathbb{F}_1^{\pm}$-algebra $B=(B^\bullet, B^+, \leq)$, i.e., an ordered blueprint $B$ equipped with a morphism $\mathbb{F}_1^{\pm} \to B$ or, equivalently, with a distinguished element $\epsilon$ in $B^\bullet$ such that $\epsilon^2=1$ and $0\leq 1+\epsilon$.


\subsection{Construction}

For a $B$-module $M$, let $TM:=\underset{l=0}{\overset{\infty}{\bigoplus}}M^{\otimes l}=(\im\; \Theta, TM^+, \leq)$, where $TM^+:=\underset{l=0}{\overset{\infty}{\bigoplus}}(M^+)^{\otimes l}$ is the $B^+$-tensor algebra and $\Theta: \underset{l=0}{\overset{\infty}{\bigvee}}(M^\bullet)^{\otimes l} \longrightarrow TM^+$ is the natural map of $B^\bullet$-sets.

Note that the usual product of $TM^+$ restricts to a product for $\im \; \Theta$. This operation turns $TM$ into a (typically non-commutative) $B$-algebra.

For $n\in\mathbb{N}$, let 
\[
\tau_n:=\{e_i\otimes e_i\equiv 0|\;i\in[n]\} \cup\{e_i\otimes e_j\equiv \epsilon e_j\otimes e_i|\;i, j \in [n],\, i\neq j\} \subseteq \big((B^n)^{\otimes 2}\big)^+.
\]
If $d\leq n$ and $I=\{i_1, \dotsc, i_d\} \in {[n] \choose d}$ with $i_1<\dotsc < i_d$, define
\[
e_I:=\overline{e_{i_1}\otimes \dotsc \otimes e_{i_d}} \in \bigbpgenquot{T(B^n)}{\tau_n}.
\]
Let
\[
  \begin{array}{cccc}
   \gamma_{d, n}: & {B^+}^{[n] \choose d} & \longrightarrow &  \big(\bigbpgenquot{T(B^n)}{\tau_n}\big)^+\\
   \\
    & (b_I)_{I\in {[n] \choose d}} & \longmapsto & \displaystyle\underset{I\in {[n] \choose d}}{\sum} b_I e_I
  \end{array}
 \]
and define $H_{d, n}:= \gamma_{d, n}\bigg((B^\bullet)^{[n]\choose d}\bigg)$ and $K_{d, n}:= \gamma_{d, n}\bigg((B^\bullet \backslash B^\times)^{[n] \choose d}\bigg)$.

\begin{lemma}
\label{basis-epsilon}
The set $\mathcal{E}:= \{e_I|\, I\subseteq [n]\}$ is a $B^+$-basis of $\big(\bigbpgenquot{T(B^n)}{\tau_n}\big)^+$.
\end{lemma}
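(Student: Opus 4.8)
The plan is to exhibit an explicit free $B^+$-module carrying a multiplication that realizes the ``sort-and-sign'' rule, and to identify it with $\big(\bigbpgenquot{T(B^n)}{\tau_n}\big)^+$ via mutually inverse morphisms. Concretely, I would let $A^+:=\bigoplus_{I\subseteq[n]}B^+\hat e_I$ be the free $B^+$-module on symbols $\hat e_I$ ($I\subseteq[n]$), and define a $B^+$-bilinear product by $\hat e_I\cdot\hat e_J:=\epsilon^{\inv(I,J)}\hat e_{I\cup J}$ when $I\cap J=\emptyset$ and $\hat e_I\cdot\hat e_J:=0$ otherwise, where $\inv(I,J):=\#\{(i,j)\in I\times J\mid i>j\}$. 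Equipping $A^+$ with the free-module partial order $\leq_A$ generated by $\sum a_I\hat e_I\leq_A\sum b_I\hat e_I$ whenever $a_I\leq_B b_I$ for all $I$, and letting $A^\bullet$ consist of the elements $b\hat e_I$ with $b\in B^\bullet$, one obtains an ordered blue $B$-algebra $A=(A^\bullet,A^+,\leq_A)$ whose ambient module is visibly free on $\{\hat e_I\}$. The whole argument aims at an isomorphism $A\cong\bigbpgenquot{T(B^n)}{\tau_n}$, under which $\{\hat e_I\}$ corresponds to $\{e_I\}$.

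First I would record two combinatorial facts. Associativity of the product reduces to $\inv(I\cup J,K)=\inv(I,K)+\inv(J,K)$ and $\inv(I,J\cup K)=\inv(I,J)+\inv(I,K)$ for pairwise disjoint $I,J,K$, which make both $(\hat e_I\hat e_J)\hat e_K$ and $\hat e_I(\hat e_J\hat e_K)$ equal to $\epsilon^{\inv(I,J)+\inv(I,K)+\inv(J,K)}\hat e_{I\cup J\cup K}$ (both vanishing as soon as two of the sets meet); since $\epsilon^2=1$ the exponents even agree on the nose. Second, every word $e_{j_1}\otimes\dotsb\otimes e_{j_l}$ becomes a scalar multiple of some $e_I$ in the quotient: if two letters coincide, choosing a closest equal pair (no equal letter strictly between them) lets me commute them adjacent at the cost of a power of $\epsilon$ and then kill the word via $e_i\otimes e_i\equiv 0$; if the letters are distinct, bubble-sorting them shows the word is $\equiv\epsilon^{\inv}e_I$. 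As words form a $B^+$-basis of $T(B^n)^+$ and the quotient map is $B^+$-linear and surjective, this shows $\cE=\{e_I\mid I\subseteq[n]\}$ spans $\big(\bigbpgenquot{T(B^n)}{\tau_n}\big)^+$.

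Next I would build the two maps. The $B$-module map $B^n\to A$, $e_i\mapsto\hat e_{\{i\}}$, is order-preserving, so it extends to a morphism of $B$-algebras $\pi_0\colon T(B^n)\to A$ (by the universal property of the tensor algebra, itself a consequence of those of $\otimes_B$ and $\bigoplus$). A direct check shows $\pi_0$ sends each relation in $\tau_n$ to an equality: $\hat e_{\{i\}}\hat e_{\{i\}}=0$, and $\hat e_{\{i\}}\hat e_{\{j\}}=\epsilon\,\hat e_{\{j\}}\hat e_{\{i\}}$ since the two sides differ by $\epsilon^2=1$. Conversely, the $B^+$-linear map $\iota\colon A^+\to\big(\bigbpgenquot{T(B^n)}{\tau_n}\big)^+$ determined by $\hat e_I\mapsto e_I$ is well defined by freeness of $A^+$, is multiplicative because $e_Ie_J=\epsilon^{\inv(I,J)}e_{I\cup J}$ (or $0$) holds in the quotient by the sorting computation, and is order-preserving since $a_I\leq_B b_I$ forces $a_Ie_I\leq b_Ie_I$.

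The crux, and the step I expect to be the main obstacle, is passing $\pi_0$ to the quotient. The ambient semiring is $T(B^n)^+$ modulo the congruence $\mathfrak c_{\langle\tau_n\rangle}$, the symmetric part of the preorder generated by $\tau_n$ together with $\leq_{T(B^n)}$; the danger is that the interaction of $\tau_n$ with the order (ultimately coming from $0\leq 1+\epsilon$ in $\Funpm$) could identify distinct $e_I$. I would rule this out using antisymmetry of $\leq_A$: the pullback relation $\{(x,y)\mid\pi_0(x)\leq_A\pi_0(y)\}$ is an additive and multiplicative preorder containing $\leq_{T(B^n)}$ and $\tau_n$, hence contains $\langle\tau_n\rangle$; therefore its symmetric part $\mathfrak c_{\langle\tau_n\rangle}$ lands in the symmetric part of $\leq_A$, which is equality because $\leq_A$ is a partial order. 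Thus $\pi_0$ factors through a morphism $\bar\pi\colon\bigbpgenquot{T(B^n)}{\tau_n}\to A$ with $\bar\pi(e_I)=\hat e_I$. Finally, $\bar\pi\circ\iota=\id_{A}$ because both maps are $B^+$-linear and agree on the basis $\{\hat e_I\}$, while $\iota\circ\bar\pi=\id$ because both are $B^+$-linear and agree on the spanning set $\{e_I\}$ from the second paragraph. Hence $\iota$ is an isomorphism carrying the free basis $\{\hat e_I\}$ of $A^+$ bijectively onto $\cE$, which proves that $\cE$ is a $B^+$-basis of $\big(\bigbpgenquot{T(B^n)}{\tau_n}\big)^+$.
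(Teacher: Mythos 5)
Your proof is correct, but it takes a genuinely different route from the paper's. The paper argues directly on the congruence: after observing that $\cE$ spans, it lifts an equality $\sum b_I e_I = \sum c_I e_I$ to explicit chains of elementary moves in $T(B^n)^+$ (adding words containing a repeated letter, replacing a word by a signed permutation of itself) and then compares coefficients of the resulting normal forms to conclude $b_I = c_I$. You instead build an explicit model --- the free $B^+$-module on symbols $\hat e_I$ with the sort-and-sign product --- and identify $\bigbpgenquot{T(B^n)}{\tau_n}$ with it via mutually inverse morphisms, so that independence of $\cE$ is inherited from freeness of the model. The decisive advantage of your route is exactly the point you single out as the crux: the congruence defining the quotient is the symmetric part of the preorder generated by $\tau_n$ \emph{together with} the ambient order $\leq_{T(B^n)}$ (which carries the relation $0 \leq 1+\epsilon$ from $\Funpm$), and your pullback-plus-antisymmetry argument disposes of any possible interaction between the two once and for all, whereas the paper's chain description accounts only for the $\tau_n$-moves and leaves that interaction implicit. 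Your construction also yields useful byproducts: the closed multiplication formula $e_I \wedge e_J = \epsilon^{\inv(I,J)} e_{I\cup J}$ (zero when $I\cap J \neq \emptyset$), a factorization statement in the spirit of the universal property the paper only records as a remark, and the decomposition $\bigwedge B^n \simeq \bigoplus_d \bigwedge^d B^n$ essentially for free. The price is a longer list of verifications, two of which deserve an explicit line in a final write-up: associativity of the sort-and-sign product via your two $\inv$-identities, and the fact that $\leq_A$ is genuinely antisymmetric --- this holds because the generated preorder on the free module coincides with coefficientwise comparison and $\leq_{B^+}$ is a partial order by the definition of ordered semiring; without that, the factorization step (and hence the whole argument) would collapse.
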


\begin{proof}
We begin by noticing that $\mathcal{E}$ clearly is a generator set.

Let $b_I,c_I$ in $B^+$, $I$ subset of $[n]$, such that $\underset{I\subseteq [n]}{\sum} b_Ie_I = \underset{I\subseteq [n]}{\sum} c_Ie_I$. Thus
\begin{equation}
\label{equivalence}
\mathfrak{b} := \underset{i_1<\dotsc<i_d}{\sum} b_{\{i_1, \dotsc, i_d\}}e_{i_1}\otimes \dotsc \otimes e_{i_d}\;
\equiv \underset{i_1<\dotsc<i_d}{\sum} c_{\{i_1, \dotsc, i_d\}}e_{i_1}\otimes \dotsc \otimes e_{i_d} =:\mathfrak{c}.
\end{equation}

By the definition of $\big\langle \tau_n \big\rangle$, there exists two sequences $\mathfrak{b} = x_0, \dotsc, x_m$ and $\mathfrak{c} = y_0, \dotsc, y_w = x_m$ of elements of $T(B^n)^+$ such that $x_{\ell+1} = x_\ell + \alpha$ (resp. $y_{\ell+1} = y_\ell + \alpha$), where $\alpha$ has the form $b. e_a\otimes e_a \otimes e_{z_1}\otimes\dotsc\otimes e_{z_f}$ for some $b$ in $B^+$ and $a, z_1, \dotsc, z_f$ in $[n]$; or $x_{\ell+1} = \rho + \beta_1$ and $x_\ell = \rho + \beta_2$ (resp. $y_{\ell+1} = \rho + \beta_1$ and $y_\ell = \rho + \beta_2$), where $\beta_1$ has the form $b. e_{i_1}\otimes \dotsc \otimes e_{i_f}$ and $\beta_2 = \textup{sign}(\sigma) b. e_{i_{\sigma(1)}} \otimes \dotsc \otimes e_{i_{\sigma(f)}}$, for some $b$ in $B^+$, $\rho$ in $T(B^n)^+$, $i_1, \dotsc, i_f$ in $[n]$, a permutation $\sigma$ and interpreting $\textup{sign}(\sigma)$ as an element of $\F_{1}^\pm$ via the identification of $-1$ with $\epsilon$.

This implies that there are $A_1, A_2$ in $T(B^n)^+$, both of the form $\sum d_{j_1, \dotsc j_t} . e_{j_1}\otimes \dotsc \otimes e_{j_t}$, such that for each set $\{j_1, \dotsc, j_t\}$ one has (at least) two indexes $u,v$ in $[t]$ satisfying $j_u = j_v$; and there are, for each index sets $\{i_1, \dotsc, i_d\}$ present in (\ref{equivalence}), two permutations $\sigma_{\{i_1, \dotsc, i_d\}}$ and $\delta_{\{i_1, \dotsc, i_d\}}$ satisfying
\begin{align}
\label{indexes}
       & \underset{i_1<\dotsc<i_d}{\sum} \textup{sign}(\sigma_{\{i_1, \dotsc, i_d\}}) b_{\{i_1, \dotsc, i_d\}}e_{i_{\sigma_{\{i_1, \dotsc, i_d\}}(1)}}\otimes \dotsc \otimes e_{i_{\sigma_{\{i_1, \dotsc, i_d\}}(d)}} + A_1\\\
\nonumber
= & \underset{i_1<\dotsc<i_d}{\sum} \textup{sign}(\delta_{\{i_1, \dotsc, i_d\}}) c_{\{i_1, \dotsc, i_d\}}e_{i_{\delta_{\{i_1, \dotsc, i_d\}}(1)}}\otimes \dotsc \otimes e_{i_{\delta_{\{i_1, \dotsc, i_d\}}(d)}} + A_2.
\end{align}

Thus, looking at the coefficient of $e_{i_1}\otimes \dotsc \otimes e_{i_d}$ in (\ref{equivalence}) and looking at the coefficient of $e_{i_{\sigma_{\{i_1, \dotsc, i_d\}}(1)}}\otimes \dotsc \otimes e_{i_{\sigma_{\{i_1, \dotsc, i_d\}}(d)}}$ in (\ref{indexes}), we conclude that $b_{\{i_1, \dotsc, i_d\}} = c_{\{i_1, \dotsc, i_d\}}$, for each index set $\{i_1, \dotsc, i_d\}$ present in (\ref{equivalence}).
\end{proof}

Let $S_{d, n}$ be the sub-$B^+$-module of $\big(\bigbpgenquot{T(B^n)}{\tau_n}\big)^+$ generated by $H_{d, n}$. The $B$-module $\bigwedge^d B^n:=\big(H_{d, n},S_{d, n}, \leq\big)$, where $\leq$ is induced from $\bigbpgenquot{T(B^n)}{\tau_n}$, is called \emph{the $d^{th}$ exterior power of $B^n$}.

Let $H_n:=\underset{l=0}{\overset{n}{\bigcup}}H_{l, n}$ and note that it generates $\big(\bigbpgenquot{T(B^n)}{\tau_n}\big)^+$ as a semigroup. The $B$-module $\bigwedge B^n:=(H_n, \big(\bigbpgenquot{T(B^n)}{\tau_n}\big)^+, \leq)$ is called \emph{the exterior algebra of $B^n$}.

The operation $(\overline{\alpha}, \overline{\beta})\mapsto \overline{\alpha\beta}$ defines a product on $\big(\bigwedge B^n)^+$, making it a (typically non-commutative) $B^+$-algebra. For $x, y \in \big(\bigwedge B^n)^+$, we denote by $x\wedge y$ the product of $x$ and $y$. For $I=\{i_1, \dotsc, i_d\}$ with $i_1 < \dotsc < i_d$, we use $e_I$ to denote the element $e_{i_1}\wedge \dotsc \wedge e_{i_d}$.

\begin{rem}
Note that \autoref{basis-epsilon}, in particular, proves \autoref{propositionA}.
\end{rem}

\begin{rem}
For $n\geq 2$, the exterior algebra $\bigwedge B^n$ may not be a $B$-algebra because its underlying pointed set $\big(\bigwedge B^n\big)^\bullet$ could not be multiplicatively closed, as we always have $e_1+e_2$ in $\big(\bigwedge B^n\big)^\bullet$ but, if $1+\epsilon$ is not in $B^\bullet$, $(e_1+e_2) \wedge (e_1+e_2) = (1+\epsilon) \cdot e_{\{1,2\}}$ is not in $\big(\bigwedge B^n\big)^\bullet$.
\end{rem}

\begin{rem}
The difference between the exterior algebra $\bigwedge B^n$ and the $B$-algebra $\bigbpgenquot{T(B^n)}{\tau_n}$ concerns only the underlying pointed $B$-set. This occurs because we need sums of elements of $\big(\bigbpgenquot{T(B^n)}{\tau_n}\big)^\bullet$ to define $B$-Pl\"ucker vectors (cf. \ref{df Plucker vectors}).

But $\bigbpgenquot{T(B^n)}{\tau_n}$ has the following universal property (similar to the universal property of the ring-theoretic exterior algebra): given a (not necessarily commutative) $B$-algebra $A$ and a morphism of $B$-modules $\varphi: B^n\rightarrow A$ satisfying $\varphi(x). \varphi(x) = 0$ and $\varphi(x) . \varphi(y) = \epsilon \varphi(y) . \varphi(x)$ for all $x,y\in B^n$, there exists a unique morphism of $B$-algebras $\Phi: \bigbpgenquot{T(B^n)}{\tau_n} \rightarrow A$ with $\Phi(\overline{x}) = \varphi(x)$ for $x\in B^n$.
\end{rem}

\begin{prop}
The natural maps $\bigwedge^d B^n\to\bigwedge B^n$ induce $\underset{d=0}{\overset{n}{\bigoplus}}\bigwedge^d B^n \simeq \bigwedge B^n$.
\end{prop}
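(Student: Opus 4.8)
The plan is to produce the comparison map from the universal property of the coproduct and then check it is an isomorphism by inspecting the three data $(-)^\bullet$, $(-)^+$ and $\leq$ separately, the orders being the only delicate point. Write $P:=\bigl(\bigbpgenquot{T(B^n)}{\tau_n}\bigr)^+$ and, for $x=\sum_{I\subseteq[n]}x_Ie_I\in P$, set $x^{(d)}:=\sum_{|I|=d}x_Ie_I$, so that $x=\sum_{d=0}^n x^{(d)}$. Each inclusion $\bigwedge^d B^n\hookrightarrow\bigwedge B^n$ (coming from $S_{d,n}\subseteq P$ and $H_{d,n}\subseteq H_n$, with the induced order) is a morphism of $B$-modules, so the universal property of the coproduct yields a canonical morphism $\Xi\colon\bigoplus_{d=0}^n\bigwedge^d B^n\to\bigwedge B^n$. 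On ambient modules, \autoref{basis-epsilon} says $\mathcal E=\{e_I\mid I\subseteq[n]\}$ is a $B^+$-basis of $P$; partitioning $\mathcal E$ by cardinality and recalling that $S_{d,n}$ is the $B^+$-span of $\{e_I\mid|I|=d\}$ gives $P=\bigoplus_{d=0}^nS_{d,n}$, i.e. $\Xi^+$ is a $B^+$-linear isomorphism. On underlying sets, the coproduct's $M^\bullet$ is by definition the image of $\Gamma\colon\bigvee_d H_{d,n}\to\bigoplus_d S_{d,n}$; since the wedge places each $H_{d,n}$ in its own summand and glues basepoints, this image is carried by $\Xi^+$ onto $\bigcup_{d=0}^n H_{d,n}=H_n=(\bigwedge B^n)^\bullet$, and as $\Xi^+$ is injective, $\Xi^\bullet$ is a bijection.

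It remains to match the orders, which is the heart of the argument. By the definition of the coproduct the order on $\bigoplus_d\bigwedge^d B^n$ is the componentwise one, $x\leq y\iff x^{(d)}\leq y^{(d)}$ in $S_{d,n}$ for all $d$, where the latter is the restriction of $\leq_P$; so I must prove that $\leq_P$ itself is \emph{graded}, i.e. $x\leq_P y\iff x^{(d)}\leq_P y^{(d)}$ for all $d$. The implication ``$\Leftarrow$'' is immediate from additivity of $\leq_P$. For ``$\Rightarrow$'' I would work upstairs on $T(B^n)^+=\bigoplus_l T_l$ with $T_l=((B^n)^+)^{\otimes l}$, where the tensor-algebra order $\leq_{T(B^n)}$ is the coproduct order and is therefore already graded. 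The key claim is
\[
(a,b)\in\langle\tau_n\rangle\ \Longrightarrow\ (a_l,b_l)\in\langle\tau_n\rangle\ \text{ for every }l,
\]
where $a_l\in T_l$ denotes the degree-$l$ component. I would verify this by induction over the construction of the generated additive–multiplicative preorder: it holds for the generators (the homogeneous degree-$2$ relations of $\tau_n$, and componentwise the graded relations of $\leq_{T(B^n)}$) and is preserved by reflexivity, transitivity and additive closure componentwise; the only nontrivial step is multiplicative closure, where $(za)_l=\sum_{p+q=l}z_pa_q$ and one derives $\bigl((za)_l,(zb)_l\bigr)\in\langle\tau_n\rangle$ by multiplying each $(a_q,b_q)$ by the homogeneous $z_p$ and summing. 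Because $\tau_n$ is homogeneous, the induced congruence is graded, so $q(T_l)=S_{l,n}$ and the quotient map $q$ respects degrees; pushing the claim through $q$ gives exactly $x\leq_P y\Rightarrow x^{(d)}\leq_P y^{(d)}$.

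The main obstacle is precisely this claim that $\langle\tau_n\rangle$ is graded, and within it the multiplicative-closure step, since the degree-$d$ projection $P\to S_{d,n}$ is only $B^+$-linear and not an algebra map, so one cannot simply assert that ``projections are order-preserving''; instead one must exploit that $\tau_n$ lives in a single tensor-degree and that the graded product spreads a homogeneous relation across exactly the right components. Once $\leq_P$ is shown to be graded, $\Xi$ is a bijective morphism whose inverse is again order-preserving, hence an isomorphism of $B$-modules, which completes the proof.
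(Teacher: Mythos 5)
Your proof is correct and takes essentially the same route as the paper: identify the ambient semirings via \autoref{basis-epsilon}, observe the underlying monoids coincide, and compare the two orders by lifting to the tensor algebra and exploiting the homogeneity of the relations $\tau_n$. Your explicit verification that the generated preorder $\langle\tau_n\rangle$ is graded (including the multiplicative-closure step) is precisely the point the paper passes over in one line — its assertion that a lift $\alpha\leq\beta$ yields $\alpha_i\leq\beta_i$ componentwise — so your write-up is, if anything, more complete at the single delicate step.
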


\begin{proof}
By \autoref{basis-epsilon}, one has that $S_{d,n}\bigcap\underset{i\neq d}{\sum} S_{i, n}= \{0\}$. Thus
\[
\big(\bigwedge B^n\big)^+
= \big(\bigbpgenquot{T(B^n)}{\tau_n}\big)^+ 
= \underset{d=0}{\overset{n}{\sum}} S_{d, n} 
= \underset{d=0}{\overset{n}{\bigoplus}} S_{d, n}
= \bigg( \underset{d=0}{\overset{n}{\bigoplus}}\bigwedge^d B^n \bigg)^+
\]
and
\[
\big(\bigwedge B^n\big)^\bullet
= H_n 
= \underset{l=0}{\overset{n}{\bigcup}}H_{l, n} 
= \underset{l=0}{\overset{n}{\bigvee}} H_{l, n}
=  \bigg( \underset{d=0}{\overset{n}{\bigoplus}}\bigwedge^d B^n \bigg)^\bullet.
\]

Let $\leq_1$ be the partial order of $\underset{d=0}{\overset{n}{\bigoplus}}\bigwedge^d B^n$ and $\leq_2$ be the partial order of $\bigwedge B^n$.

As, for each $d$, the partial order of $\bigwedge^d B^n$ is induced from $\leq_2$, one has that $x\leq_1 y$ implies $x\leq_2 y$.

Let $x$ and $y$ in $\big(\bigwedge B^n\big)^+$ such that $x\leq_2 y$. Then there exists $\alpha$ and $\beta$ in $\underset{d=0}{\overset{n}{\bigoplus}} \big((B^n)^+\big)^{\otimes d}$, whose classes in $\big(\bigwedge B^n\big)^+$ are $x$ and $y$, respectively, and such that $\alpha \leq \beta$. Writing $\alpha = \sum \alpha_i$ and $\beta = \sum \beta_i$, with $\alpha_d$ and $\beta_d$ in $\big((B^n)^+\big)^{\otimes d}$, one has that $\alpha_i\leq \beta_i$ for all $i$. Let $x_i$ and $y_i$ the classes of $\alpha_i$ and $\beta_i$ in $\big(\bigwedge B^n\big)^+$, respectively. Thus $x_d$ and $y_d$ are in $S_{n,d}$ and satisfy $x_d\leq_1 y_d$ for all $d$. Therefore $x\leq_1 y$.
\end{proof}

If $R$ is a ring, one has the \emph{monomial ordered blueprint} associated to $R$
\[
R^{\mon}:=\bigbpgenquot{R^\bullet}{1. b\leq \sum 1. a_i|\, b=\sum a_i\in R},
\]
where $R^\bullet$ is the multiplicative underlying monoid of $R$. This is an $\mathbb{F}_1^{\pm}$-algebra, with $\epsilon=1. (-1_R)$.

The next theorem proves the first item of \autoref{teoB} by showing how to recover the usual exterior algebra of rings from our construction of the exterior algebra for $\mathbb{F}_1^{\pm}$-algebras.

\begin{thm}
\label{ring}
Let $R$ be a ring and $\mathcal{B}:=R^{\mon}$. Then $\big(\bigwedge \mathcal{B}^n\big)^{\textup{hull}, +}\simeq \bigwedge R^n$ (the usual exterior algebra) as $R$-algebras.
\end{thm}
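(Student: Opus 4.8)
The plan is to push the entire construction of $\bigwedge \mathcal{B}^n$ through the algebraic hull, exploiting that $(-)^{\hull}$ is a reflection: in each of the relevant categories (ordered blueprints, $B$-modules, $B$-algebras) it is left adjoint to the inclusion of objects with trivial partial order, because any order-preserving morphism into such an object must identify every pair with $a\leq b$. Being a left adjoint, the hull preserves all colimit-type operations—finite coproducts, tensor algebras, and quotients by generated congruences—which are exactly the operations assembling $\bigwedge \mathcal{B}^n$. The remaining content is then the computation of the hull of the base, together with the explicit basis from \autoref{basis-epsilon} needed to pin down the order.

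First I would record that $\mathcal{B}^{\hull,+}\cong R$. The semiring $\mathcal{B}^+$ consists of formal sums $\sum n_r. r$ of elements $r\in R$ (with $1. 0_R=0$), ordered by the relations $1. b\leq \sum 1. a_i$ arising from each identity $b=\sum a_i$ in $R$; the symmetrizing congruence $\mathfrak{f}$ therefore forces $1. b\equiv\sum 1. a_i$, and the evaluation map $\sum n_r. r\mapsto \sum n_r r$ descends to an isomorphism onto $R$. In particular the distinguished element $\epsilon=1. (-1_R)$ is sent to $-1\in R$, which is precisely the input needed to recover the classical sign.

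Applying the reflection stepwise then yields $(\mathcal{B}^n)^{\hull}\cong R^n$ (finite biproducts, hence coproducts, are preserved), $T(\mathcal{B}^n)^{\hull,+}\cong T(R^n)$ (the tensor algebra is a free, hence left adjoint, construction over the base $R$), and finally
\[
\big(\textstyle\bigwedge\mathcal{B}^n\big)^{\hull}\;\cong\;\bigbpgenquot{T(R^n)}{\overline{\tau_n}},
\]
where $\overline{\tau_n}$ is the image of $\tau_n$ under $\epsilon\mapsto -1$. Thus $\overline{\tau_n}$ consists of the relations $e_i\otimes e_i=0$ and $e_i\otimes e_j=-e_j\otimes e_i$; anticommutativity on the basis gives $e_i\otimes e_j+e_j\otimes e_i=0$, whence $x\otimes x=0$ for every $x\in R^n$, so these generate the same ideal as the usual exterior relations and the right-hand side is $\bigwedge R^n$.

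The step I expect to demand the most care is the commutation of the hull with the quotient by $\tau_n$, namely verifying that symmetrizing the order on $\big(\bigwedge\mathcal{B}^n\big)^+$ creates no relations between distinct basis vectors $e_I$. I would settle this exactly as in the proof of \autoref{basis-epsilon}: by that lemma $\big(\bigwedge\mathcal{B}^n\big)^+$ is free over $\mathcal{B}^+$ on $\{e_I\mid I\subseteq[n]\}$, every generator of its order descends from a relation $1. b\leq\sum 1. a_i$ in $\mathcal{B}^+$ scaled by a single monomial and so stays inside one rank-one summand $\mathcal{B}^+ e_I$, while $\tau_n$ is symmetric and merely permutes or annihilates monomials without additively mixing different $e_I$. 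Hence the symmetrizing congruence acts coefficient-wise, $\big(\bigwedge\mathcal{B}^n\big)^{\hull,+}$ is free over $R$ on $\{e_I\}$ with the wedge multiplication, and this identification is manifestly an isomorphism of $R$-algebras onto $\bigwedge R^n$.
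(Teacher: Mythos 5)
Your proposal is correct in substance, but it takes a genuinely different route from the paper's proof. The paper argues computationally: it uses the decomposition $\bigwedge \mathcal{B}^n \simeq \bigoplus_{d=0}^n \bigwedge^d \mathcal{B}^n$ together with \autoref{basis-epsilon} to identify $\big(\bigwedge \mathcal{B}^n\big)^{\hull,+}$ with $\bigoplus_d R^{\binom{[n]}{d}}$ as $R$-modules via $[e_I]\mapsto e_I$, and then observes that this map is a morphism of $R$-algebras onto $\bigwedge R^n$, using the classical fact that $\bigwedge R^n$ is free on the $e_I$. You instead treat $(-)^{\hull}$ as a reflection onto trivially ordered objects, push it through the construction (coproduct, free tensor algebra, quotient by $\gen{\tau_n}$), and finish by comparing relations inside $T(R^n)$: the image of $\tau_n$ generates the same two-sided ideal as $\{x\otimes x \mid x\in R^n\}$. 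Your route is more conceptual: it explains \emph{why} the theorem holds, adapts essentially verbatim to \autoref{semifield}, and, if the formal part is airtight, it does not need \autoref{basis-epsilon} at all, since freeness on the $e_I$ then comes from the classical side. The cost is infrastructure the paper never sets up: that $T(-)$ is a free-algebra left adjoint in the ordered blue setting, and that the reflection is compatible with the blueprint tensor product --- which is \emph{not} a colimit, so ``left adjoints preserve colimits'' does not cover it directly; your detour through the free tensor algebra is what rescues this, and it should be said explicitly.

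Two imprecisions need repair. First, $\mathcal{B}^n$ is a \emph{product}, not a coproduct, in $\mathcal{B}\text{-}\Mod$: the underlying monoid of the product is $(\mathcal{B}^\bullet)^n$, while that of the coproduct is a wedge, so these are different objects and ``biproducts'' is not correct. The slip is harmless only because the theorem concerns $(-)^{\hull,+}$, and the ambient module and order of the product agree with those of the coproduct, so the whole computation at the $+$ level is unchanged; this observation should be made. Second, in your closing paragraph, the claim that every generator of the order on $\big(\bigwedge\mathcal{B}^n\big)^+$ stays inside a single rank-one summand $\mathcal{B}^+ e_I$ is false as stated: for instance $(1. b\, e_1 + 1. c\, e_2)\otimes e_3 \leq \big(\sum 1. a_i\, e_1 + \sum 1. d_j\, e_2\big)\otimes e_3$ is a generator of the tensor-product order mixing the summands of $e_{\{1,3\}}$ and $e_{\{2,3\}}$. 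The true statement --- that the symmetrizing congruence acts coefficient-wise --- holds, but proving it requires tracking chains of elementary relations exactly as in the proof of \autoref{basis-epsilon}, not inspecting the canonical generators; alternatively, note that this entire verification is redundant once the categorical part is in place, since $\bigbpgenquot{T(R^n)}{\overline{\tau_n}} \simeq \bigwedge R^n$ already delivers freeness on the $e_I$.
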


\begin{proof}

Note that $\mathcal{B}^{\textup{hull}, +}\simeq R$. Thus one has the isomorphisms of $R$-modules

\[
\big(\bigwedge \mathcal{B}^n\big)^{\textup{hull}, +} \simeq \bigg(\underset{d=0}{\overset{n}{\bigoplus}}\bigwedge^d \mathcal{B}^n\bigg)^{\textup{hull}, +} \simeq \bigg(\underset{d=0}{\overset{n}{\bigoplus}} \mathcal{B}^{ {[n] \choose d} }\bigg)^{\textup{hull}, +} \simeq \underset{d=0}{\overset{n}{\bigoplus}} \,R^{ {[n] \choose d}} \simeq \bigwedge R^n
\]
via
\[
  \begin{array}{ccc}
  
 \big(\bigwedge \mathcal{B}^n\big)^{\textup{hull}, +} & \longrightarrow & \bigwedge R^n\\
  
  [e_{i_1}\wedge \dotsc \wedge e_{i_d}] & \longmapsto & e_{i_1}\wedge \dotsc \wedge e_{i_d},
  \end{array}
 \]
where $[x]$ denotes the class of $x\in \big(\bigwedge \mathcal{B}^n\big)^+$ in $\big(\bigwedge \mathcal{B}^n\big)^{\textup{hull}, +}$.

As this map is a morphism of $R$-algebras, we have the result.
\end{proof}

For an idempotent semifield $S$, let
\[
\Omega_S:=\bigg\{1. b\leq \underset{i=1}{\overset{m}{\sum}}1. a_i\bigg|\, \underset{i=1}{\overset{m}{\sum}}a_i=b+\underset{i\neq k}{\underset{i=1}{\overset{m}{\sum}}}a_i \mbox{ in } S,\, \forall k\in [m]\bigg\}
\]
and define the \emph{monomial ordered blueprint} associated to $S$ as $S^{\mon}:=\bigbpgenquot{S^\bullet}{\Omega_S}$. This is an $\mathbb{F}_1^{\pm}$-algebra, with $\epsilon=1. 1_S$ and the construction above extends to a functor $(-)^{\mon}: \IdempSFields \to \OBlpr$.

For an ordered blueprint $C$, let $C^{\textup{idem}}:=\bigbpgenquot{C}{1\equiv 1+1}$ be the \emph{idempotent ordered blueprint} associated to $C$. This name comes from the fact that $C^{\textup{idem}, +}$ is always an idempotent semiring.

The following definition is due to Jeffrey and Noah Giansiracusa (cf. \cite[Definition 3.1.2]{Giansiracusa18}).

\begin{df}
Let $S$ be an idempotent semifield. For $n\in \mathbb{N}$, let
\[\Psi_n:=\{e_i\otimes e_j \equiv e_j \otimes e_i|\, i,j\in[n]\} \cup \{e_i\otimes e_i \equiv 0|\, i\in[n]\} \subseteq (S^n)^{\otimes 2}.
\]
The tropical Grassmann algebra of $S^n$ is the graded $S$-algebra
\[
\bigwedge S^n 
:= Sym\; S^n \big\slash\!\big\langle\,e_i^2 \,\equiv\, 0|\, i\in[n]\,\big\rangle 
\simeq T(S^n)\big\slash\!\big\langle\,\Psi_n\,\big\rangle.
\]

The $d^{th}$-homogeneous direct summand of $\bigwedge S^n$, denoted by $\bigwedge^d S^n$, is called the $d^{th}$ \textit{tropical wedge power of} $S^n$.
\end{df}

The next theorem proves the second item of \autoref{teoB} by showing how to recover the tropical Grassmann algebra from our construction of the exterior algebra and exterior powers for $\mathbb{F}_1^{\pm}$-algebras.

\begin{thm}
\label{semifield}
Let $S$ be an idempotent semifield and $\mathcal{S}:=S^{\mon}$. 
Then $\big(\bigwedge^d \mathcal{S}^n\big)^{\textup{idem},+}\simeq \bigwedge^d S^n$, for all $d$, and $\big(\bigwedge \mathcal{S}^n\big)^{\textup{idem},+}\simeq \bigwedge S^n$, where $\bigwedge^d S^n$ and $\bigwedge S^n$ denotes the Giansiracusa $d^{th}$ tropical wedge power and tropical Grassmann algebra of $S^n$, respectively.
\end{thm}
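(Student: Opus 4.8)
The plan is to follow the same template as \autoref{ring}, replacing the algebraic hull by the idempotent functor and the ring $R$ by the semifield $S$, and then to match the resulting $S$-algebra with Giansiracusa's presentation $\bigwedge S^n=T(S^n)/\langle\Psi_n\rangle$. The one extra input I would isolate first is the semiring isomorphism $\mathcal{S}^{\idem,+}\simeq S$, the counterpart of the identity $\mathcal{B}^{\hull,+}\simeq R$ used in \autoref{ring}. Concretely, modulo the congruence generated by $\Omega_S$ together with $1\equiv 1+1$, a formal sum $\sum 1.a_i$ collapses to $1.(\sum a_i)$: the defining inequalities of $\Omega_S$ give $1.(\sum a_i)\leq\sum 1.a_i$, while applying $\Omega_S$ to the collection $(b,b)$ with $b\geq a_i$ and using idempotency yields $1.a_i\leq 1.b$, hence the reverse inequality. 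Under this identification the distinguished element $\epsilon=1.\,1_S$ becomes $1\in S$.

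Next I would compute the underlying module. By \autoref{basis-epsilon}, the set $\{e_I\mid I\subseteq[n]\}$ is a free $\mathcal{S}^+$-basis of $(\bigwedge\mathcal{S}^n)^+$, and the preceding proposition splits this according to $|I|=d$. Since the congruence generated by $1\equiv 1+1$ acts coordinatewise on a free module, it commutes with free modules and direct sums, so $(\bigwedge\mathcal{S}^n)^{\idem,+}$ is the free $S$-module $\bigoplus_{I\subseteq[n]} S\,e_I$, graded with $d$-th piece $(\bigwedge^d\mathcal{S}^n)^{\idem,+}\simeq\bigoplus_{|I|=d}S\,e_I$. This is the exact analogue, now over $S$, of the chain of module isomorphisms in \autoref{ring}.

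It then remains to identify this $S$-algebra with $\bigwedge S^n$. Because $\epsilon\mapsto 1$, the defining relations $\tau_n=\{e_i\otimes e_i\equiv 0\}\cup\{e_i\otimes e_j\equiv\epsilon\,e_j\otimes e_i\}$ become precisely Giansiracusa's $\Psi_n=\{e_i\otimes e_i\equiv 0\}\cup\{e_i\otimes e_j\equiv e_j\otimes e_i\}$. I would use this to produce mutually inverse $S$-algebra homomorphisms fixing each $e_i$. In one direction, $(\bigwedge\mathcal{S}^n)^{\idem,+}$ is an idempotent $S$-algebra in which $e_i\wedge e_i=0$ and $e_i\wedge e_j=e_j\wedge e_i$ hold (the images of $\tau_n$ with $\epsilon=1$), so the presentation $\bigwedge S^n=T(S^n)/\langle\Psi_n\rangle$ yields an $S$-algebra map $\bigwedge S^n\to(\bigwedge\mathcal{S}^n)^{\idem,+}$ sending $e_i\mapsto e_i$. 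In the other direction, freeness from \autoref{basis-epsilon} lets me define an $S$-linear map $e_I\mapsto e_I$, which is multiplicative because in both algebras $e_I\wedge e_J$ equals $e_{I\cup J}$ when $I\cap J=\emptyset$ and $0$ otherwise, the sign having disappeared. Both composites fix the generators $e_1,\dots,e_n$ and are $S$-algebra maps, hence equal the identity; restricting to homogeneous degree $d$ gives $(\bigwedge^d\mathcal{S}^n)^{\idem,+}\simeq\bigwedge^d S^n$.

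The main obstacle I anticipate is not the comparison of products, which is routine once $\epsilon=1$, but the two rigidity checks that flank it: establishing $\mathcal{S}^{\idem,+}\simeq S$, which genuinely uses the precise shape of $\Omega_S$ so that idempotency recovers the addition of $S$ and nothing coarser, and verifying that passing to the idempotent quotient neither collapses nor enlarges the basis $\{e_I\}$. The freeness in \autoref{basis-epsilon} is exactly what guarantees the latter, so the whole argument hinges on having that lemma in hand.
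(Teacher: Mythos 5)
Your proposal is correct and follows essentially the same route as the paper: the paper likewise first establishes $\mathcal{S}^{\textup{idem},+}\simeq S$ by exactly your $\Omega_S$-plus-idempotency argument (carried out there for binary sums $c=a+b$ rather than general $m$-fold sums), then uses \autoref{basis-epsilon} to identify $\big(\bigwedge^d\mathcal{S}^n\big)^{\textup{idem},+}$ with the free $S$-module $S^{\binom{[n]}{d}}$, and finally concludes via the $S$-algebra map $[e_I]\mapsto e_I$. The only cosmetic difference is in the last step, where the paper invokes the freeness of Giansiracusa's $\bigwedge^d S^n$ to match modules directly, while you instead build mutually inverse algebra maps from the presentation $T(S^n)/\langle\Psi_n\rangle$ and from freeness of the source --- an equivalent, slightly more self-contained piece of bookkeeping.
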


\begin{proof}
To begin with, we prove that $\mathcal{S}^{\textup{idem},+}\simeq S$ as semirings. We will denote the class of $\sum n_i . s_i \in \mathcal{S}^{+}$ in $\mathcal{S}^{\textup{idem}, +}$ by $[\sum n_i . s_i]$. One has the natural map
\[
\begin{array}{cccc}
\varphi: & \mathcal{S}^{\textup{idem},+} & \longrightarrow & S \\
 & \left[\sum n_i . s_i \right]& \longmapsto & \sum n_i  s_i,
\end{array}
\]
which is a surjective morphism of semirings. Note that $\bigg[\underset{i=1}{\overset{m}{\sum}} m_i . s_i\bigg]=\bigg[\underset{i=1}{\overset{m}{\sum}}1. s_i\bigg]$ in $\mathcal{S}^{\textup{idem},+}$, for all positive integers $m_i$ and $s_i$ in $S$.

Let $a$, $b$ and $c:=a+b$ be elements of $S$. Note that $c + b = c = c + a$ and $c+c = a + c$. Thus, in $\mathcal{S}^+$, one has $1. c\leq 1. a + 1. b$ and $1 . a \leq 2 . c$. Analogously, $1 . b \leq 2 . c$. Therefore, $1 . a + 1 . b \leq 4 . c$ in $\mathcal{S}^+$, which implies $1. c\leq 1. a + 1. b\leq 4 . c = 1. c$. Thus, $1. c = 1. a + 1. b$ in $\mathcal{S}^{\textup{idem},+}$. As a consequence, we obtain that $\varphi$ is injective.

Next we observe that $\big(\bigwedge^d \mathcal{S}^n\big)^+ 
  \simeq \bigg(\mathcal{S}^{[n] \choose d}\bigg)^+ 
  \simeq \big(\mathcal{S}^+\big)^{[n] \choose d} \mbox{ as } \mathcal{S}^+\mbox{-modules}$, which implies
\[
\begin{array}{c}
\big(\bigwedge^d \mathcal{S}^n\big)^{\textup{idem}, +}  \simeq \bigg(\mathcal{S}^{[n] \choose d}\bigg)^{\textup{idem}, +}
  \simeq \big(\mathcal{S}^{\textup{idem}, +}\big)^{[n] \choose d}
  \simeq S^{[n] \choose d} \simeq \bigwedge^d S^n
  \end{array}
 \]
as $S$-modules.

Thus $\big(\bigwedge \mathcal{S}^n\big)^{\textup{idem}, +}\simeq \underset{d=0}{\overset{n}{\bigoplus}}\big(\bigwedge^d \mathcal{S}^n\big)^+ \simeq \underset{d=0}{\overset{n}{\bigoplus}}\big(\bigwedge^d S^n\big) \simeq \bigwedge S^n$, via
\[
  \begin{array}{ccc}
  \big(\bigwedge \mathcal{S}^n\big)^{\textup{idem}, +} & \longrightarrow & \bigwedge S^n\\
  
  [e_I] & \longmapsto & e_I.
  \end{array}
  \]
As this map is a morphism of $S$-algebras, we have the result.
\end{proof}

\section{Matroids}

In this section we will focus on matroids over blueprints and prove \autoref{teoC}.

\begin{df}
\label{df B-matroid}
A \textit{Grassmann-Pl\"ucker function of rank $d$ on $[n]$ with coefficients in $B$} is a function $\Delta: {[n] \choose d} \to B^\bullet$ such that $\Delta(J)$ is in $B^\times$ for some $J$ and satisfies the Pl\"ucker relations
\[
\displaystyle\underset{i_k\notin X}{\sum} \epsilon^k \Delta(X\cup \{i_k\})\Delta(Y\backslash \{i_k\})\geq 0
\]
whenever $X\in {[n] \choose d-1}$ and $Y=\{i_1, \dotsc, i_{d+1}\}\in {[n] \choose d+1}$, with $i_1 < \dotsc < i_{d+1}$.

We say that two Grassmann-Pl\"ucker functions $\Delta$ and $\Delta'$ are equivalent, and write $\Delta\sim\Delta'$, if there is some $a$ in $B^\times$ such that $\Delta(I)=a. \Delta'(I)$ for all $I$. A \textit{$B$-matroid (of rank $d$ on $[n]$)} is an equivalence class of Grassmann-Pl\"ucker functions (of rank $d$ on $[n]$) with coefficients in $B$ (cf. \cite[Definition 5.1]{Baker-Lorscheid21}).
\end{df}

\begin{rem}
Besides other applications, the order in blueprint theory is used to describe relations analogous to algebraic equations. In particular, the \textit{positive cone} $\{b\in B^+|\, b\geq 0\}$ in Baker and Lorscheid's theory of $\F^\pm_1$-algebras plays the same role as of the null set in Baker and Bowler's theory of tracts, generalizing previous concepts of zero elements and sets (cf. \cite[Section 1.2.4]{Baker-Lorscheid21} and \cite[Section 2.3]{Baker-Bowler19}).
\end{rem}

For $X\in {[n] \choose d-1}$ and $Y=\{i_1, \dotsc, i_{d+1}\} \in {[n] \choose d+1}$ with $i_1 < \dotsc < i_{d+1}$, there exists a (unique) morphism of $B$-modules $\varphi_{X,Y}: \bigwedge^d B^n \otimes \bigwedge^d B^n \to B$ that sends $e_I\otimes e_J$ to $\epsilon^k$ if there is $k \in [d+1]$ such that $I=X\cup\{i_k\}$ and $J=Y\backslash \{i_k\}$, and sends $e_I\otimes e_J$ to $0$ otherwise.

\begin{df}
\label{df Plucker vectors}

A rank $d$ $B$-Pl\"ucker vector is a $v\in H_{d, n}\backslash K_{d, n}$ satisfying $\varphi_{X, Y}(v\otimes v)\geq 0$, i.e., a $v=\displaystyle\underset{I\in {[n] \choose d}}{\sum} v_I e_I \in H_{d, n}$ such that there exists a $J$ with $v_J\in B^\times$ and, for all $X\in {[n] \choose d-1}$ and $Y\in {[n] \choose d+1}$, one has
\[
\underset{i_k\notin X}{\sum}\epsilon^k v_{X\cup \{i_k\}} v_{Y\backslash \{i_k\}} \geq 0,
\]
where $Y=\{i_1, \dotsc, i_{d+1}\}$ with $i_1 < \dotsc < i_{d+1}$.

Note that $B^\times$ acts on the set of $B$-Pl\"ucker vectors, and more generally on $H_{d,n}$, by usual multiplication. So we can take the equivalence class of a $B$-Pl\"ucker vector in $H_{d,n}/B^{\times}$.
\end{df}

\begin{thm}
\label{teo vectors}
There is a bijection
\[
   \begin{array}{ccc}
   \{\text{rank d $B$-Pl\"ucker vectors}\} & \longrightarrow & \{\text{Grassmann-Pl\"ucker functions }  \Delta: {[n] \choose d} \rightarrow B^\bullet \}\\
   v=\displaystyle\underset{I\in {[n] \choose d}}{\sum} v_I e_I & \longmapsto & \Delta: I \longmapsto v_I
  \end{array}
\]
\end{thm}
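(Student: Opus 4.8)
The plan is to produce the stated map together with an explicit inverse, and then observe that, after identifying a vector with its tuple of coordinates in the basis $\{e_I\}$, the two conditions defining a $B$-Pl\"ucker vector translate \emph{verbatim} into the two conditions defining a Grassmann-Pl\"ucker function. The whole argument is anchored by \autoref{basis-epsilon}.

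First I would invoke \autoref{basis-epsilon}: since $\{e_I\mid I\in\binom{[n]}{d}\}$ is a $B^+$-basis of $S_{d,n}$, every $v\in H_{d,n}$ admits a \emph{unique} expression $v=\sum_{I\in\binom{[n]}{d}}v_Ie_I$ with each $v_I\in B^\bullet$ (such a tuple exists because $v=\gamma_{d,n}\big((v_I)_I\big)$ with $(v_I)_I\in(B^\bullet)^{\binom{[n]}{d}}$, and it is unique by the basis property). Hence $v\mapsto\Delta$, $\Delta(I):=v_I$, is a well-defined map taking values in $B^\bullet$. Conversely a function $\Delta\colon\binom{[n]}{d}\to B^\bullet$ yields $v:=\gamma_{d,n}\big((\Delta(I))_I\big)=\sum_I\Delta(I)e_I\in H_{d,n}$, and by uniqueness of coordinates these two assignments are mutually inverse bijections between $H_{d,n}$ and the set of all functions $\binom{[n]}{d}\to B^\bullet$.

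Next I would match the two defining conditions. By definition of $K_{d,n}=\gamma_{d,n}\big((B^\bullet\setminus B^\times)^{\binom{[n]}{d}}\big)$ and uniqueness of coordinates, the condition $v\notin K_{d,n}$ holds precisely when $v_J\in B^\times$ for some $J$, which is exactly the requirement $\Delta(J)\in B^\times$ for some $J$. To match the inequalities, fix $X\in\binom{[n]}{d-1}$ and $Y=\{i_1,\dots,i_{d+1}\}\in\binom{[n]}{d+1}$ and compute $\varphi_{X,Y}(v\otimes v)$ by $B^+$-bilinearity: writing $v\otimes v=\sum_{I,J}v_Iv_J\,(e_I\otimes e_J)$ and applying the morphism $\varphi_{X,Y}$, the only surviving summands are those with $I=X\cup\{i_k\}$ and $J=Y\setminus\{i_k\}$ for some $k$ with $i_k\notin X$, each contributing $\epsilon^k v_{X\cup\{i_k\}}v_{Y\setminus\{i_k\}}$. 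Thus
\[
\varphi_{X,Y}(v\otimes v)=\sum_{i_k\notin X}\epsilon^k\,v_{X\cup\{i_k\}}\,v_{Y\setminus\{i_k\}},
\]
so the Pl\"ucker condition $\varphi_{X,Y}(v\otimes v)\geq 0$ is literally the Grassmann-Pl\"ucker relation for $\Delta$. Combining the two matchings shows that the bijection restricts to a bijection between rank $d$ $B$-Pl\"ucker vectors and Grassmann-Pl\"ucker functions, which is the assertion.

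I do not anticipate a genuine obstacle; the proof is an unwinding of definitions supported by the basis lemma. The one point deserving care is the expansion of $\varphi_{X,Y}(v\otimes v)$: one must check that the distinct indices $k\in[d+1]$ give rise to \emph{distinct} pairs $\big(X\cup\{i_k\},\,Y\setminus\{i_k\}\big)$, so that there is no double counting or accidental cancellation, and that the sign $\epsilon^k$ appears with the correct exponent. Both facts follow immediately from the definition of $\varphi_{X,Y}$, since distinct elements $i_k\in Y$ yield distinct sets $Y\setminus\{i_k\}$ and, for $i_k\notin X$, distinct sets $X\cup\{i_k\}$.
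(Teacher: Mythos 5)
Your proposal is correct and follows exactly the route the paper intends: its proof of \autoref{teo vectors} is the single line ``It follows from \autoref{basis-epsilon} and the definitions above,'' and your argument is precisely that unwinding --- uniqueness of coordinates from the basis lemma, the matching of $v\notin K_{d,n}$ with the invertibility condition on $\Delta$, and the expansion of $\varphi_{X,Y}(v\otimes v)$ recovering the Pl\"ucker relations. You have simply supplied the details the paper leaves implicit.
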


\begin{proof}
It follows from \autoref{basis-epsilon} and the definitions above.
\end{proof}

The next corollaries prove \autoref{teoC}.

\begin{cor}
Let $K$ be a field and $B:= K^\mon$. Then there exists a bijection between $B$-matroids of rank $d$ on $[n]$ and $K$-matroids of rank $d$ on $[n]$.
\end{cor}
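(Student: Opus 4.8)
The plan is to compare the two sides directly through their defining Grassmann--Pl\"ucker data. By \autoref{df B-matroid}, a $B$-matroid of rank $d$ on $[n]$ is a $B^\times$-class of functions $\Delta\colon\binom{[n]}{d}\to B^\bullet$ with $\Delta(J)\in B^\times$ for some $J$ and satisfying the $B$-Pl\"ucker relations, while a $K$-matroid in the sense of \cite{Baker-Bowler19} is a $K^\times$-class of nonzero functions $\varphi\colon\binom{[n]}{d}\to K$ satisfying the classical Pl\"ucker relations $\sum_{i_k\notin X}(-1)^k\varphi(X\cup\{i_k\})\varphi(Y\setminus\{i_k\})=0$. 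I would show that the tautological assignment $\Delta\mapsto\Delta$ is a bijection between these two collections.

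First I would record the elementary identifications for $B=K^\mon$. Since $B^\bullet=K^\bullet$ as multiplicative monoids and $K$ is a field, one has $B^\times=K^\times$, and an element of $B^\bullet$ lies in $B^\times$ exactly when it is nonzero in $K$. Hence a Grassmann--Pl\"ucker function $\Delta\colon\binom{[n]}{d}\to B^\bullet$ with some $\Delta(J)\in B^\times$ is precisely a function $\binom{[n]}{d}\to K$ that does not vanish identically, and the equivalence relation by $B^\times$ is ordinary rescaling by $K^\times$. Thus the two class sets are built from the same underlying functions and the same group action; only the Pl\"ucker constraint remains to be compared.

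The heart of the argument is the claim that, for $c_1,\dots,c_m\in K$ viewed as elements of $B^\bullet$, the relation $0\leq\sum_k 1.c_k$ in $B^+$ holds if and only if $\sum_k c_k=0$ in $K$. Applying this with $c_k=\epsilon^k\Delta(X\cup\{i_k\})\Delta(Y\setminus\{i_k\})$, which under $B^\bullet=K^\bullet$ and $\epsilon=-1_K$ equals $(-1)^k\Delta(X\cup\{i_k\})\Delta(Y\setminus\{i_k\})\in K$, turns the $B$-Pl\"ucker relation of \autoref{df B-matroid} into the classical Pl\"ucker relation over $K$, and conversely. The backward direction is immediate from the generators of $K^\mon$: taking $b=0$ in the defining relation $1.b\leq\sum 1.a_i$ (valid whenever $b=\sum a_i$ in $K$) and using $1.0=0$ yields $0\leq\sum_k 1.c_k$ whenever $\sum_k c_k=0$. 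For the forward direction I would invoke the isomorphism $B^{\hull,+}\simeq K$ already used in the proof of \autoref{ring}: since the algebraic hull makes $\leq$ symmetric, $0\leq\sum_k 1.c_k$ forces $[0]=[\sum_k 1.c_k]$ in $B^{\hull,+}$, and the identification $[\sum n_i.a_i]\mapsto\sum n_i a_i$ of $B^{\hull,+}$ with $K$ gives $\sum_k c_k=0$. Equivalently, one checks that the evaluation map $B^+\to K$, $\sum n_i.a_i\mapsto\sum n_i a_i$, is an order-preserving semiring morphism into $K$ with the trivial order (it sends both sides of each generator to the common value $b$), so that $0\leq\sum_k 1.c_k$ descends to $0=\sum_k c_k$ in $K$.

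Assembling these pieces, $\Delta\mapsto\Delta$ respects the Pl\"ucker constraints in both directions, the nondegeneracy conditions match, and it is equivariant for the $B^\times=K^\times$ actions, so it induces the desired bijection on $B^\times$-classes. The only nontrivial point, and the step I expect to require the most care, is the forward implication of the order characterization: one must rule out that the abstractly generated partial order of $K^\mon$ could relate $0$ to a sum whose evaluation in $K$ is nonzero. The hull isomorphism (or, concretely, the evaluation morphism into $(K,=)$) is exactly what excludes this, and everything else is a routine matching of definitions.
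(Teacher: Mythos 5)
Your proof is correct and takes essentially the same route as the paper: both identify $B$-matroids and $K$-matroids with classes of Grassmann--Pl\"ucker functions (using $B^\times=K^\times$) and transport the Pl\"ucker conditions through the identification $B^{\hull,+}\simeq K$, which the paper phrases as composing with the functor $(-)^{\hull,+}$. The only difference is one of detail: you spell out the key verification---that $0\leq\sum_k 1.c_k$ in $K^{\mon,+}$ if and only if $\sum_k c_k=0$ in $K$, via the evaluation morphism or the hull---which the paper's one-line proof leaves implicit.
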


\begin{proof}
Due to the description of $K$-matroids via Grassmann-Pl\"ucker functions (cf. \cite[p. 841]{Baker-Bowler19}), the bijection is reached by composing the Grassmann-Pl\"ucker functions presented in \autoref{teo vectors} with the functor $( - )^{\hull, +}$ and taking equivalence classes on both sides.
\end{proof}

The following is a reformulation of Definition 4.1.1 from \cite{Giansiracusa18}.

\begin{df}
\label{df-S-matroid}
Let $S$ be an indempotent semifield. A \textit{rank $d$ tropical $S$-Pl\"ucker vector} is a nonzero $v = \sum v_I e_I \in \bigwedge^d S^n$ satisfying, for all subsets $A\in {[n] \choose d+1}$ and $X \in {[n] \choose d-1}$,
\[
\underset{i\in A\backslash X}{\sum} v_{a-i} v_{X+i} = 
\underset{i\in A\backslash X, \;i\neq p}{\sum} v_{a-i} v_{X+i}
\]
in $S$, for all $p\in A\backslash X$.
\end{df}

\begin{cor}
Let $S$ be an idempotent semifield and $B:=S^\mon$. Then there exists a bijection between rank $d$ $B$-Pl\"ucker vectors and rank $d$ tropical $S$-Pl\"ucker vectors.    
\end{cor}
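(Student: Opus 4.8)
The plan is to realize the bijection as the identity on Plücker coordinates and to reduce the whole statement to a single order-theoretic fact about $S^\mon$. By \autoref{basis-epsilon} an element of $H_{d,n}=(\bigwedge^d B^n)^\bullet$ is uniquely a sum $v=\sum_{I\in\binom{[n]}{d}} v_I e_I$ with $v_I\in B^\bullet=S^\bullet=S$, and by \autoref{semifield} the isomorphism $(\bigwedge^d B^n)^{\idem,+}\simeq\bigwedge^d S^n$ sends $[e_I]\mapsto e_I$; hence $v=\sum v_I e_I\mapsto \sum v_I e_I$ identifies $H_{d,n}$ with the coordinate tuples underlying $\bigwedge^d S^n$. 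Since $S$ is a semifield we have $B^\times=S\smallsetminus\{0\}$ and $B^\bullet\smallsetminus B^\times=\{0\}$, so the requirement that some $v_J$ lie in $B^\times$ (equivalently $v\in H_{d,n}\smallsetminus K_{d,n}$) is exactly the requirement that $v$ be nonzero, matching the hypothesis of \autoref{df-S-matroid}. Thus the two sides have the same underlying objects, and it remains to check that the defining relations coincide.

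Next I would simplify the $B$-Plücker relation. In $B=S^\mon$ the distinguished element is $\epsilon=1.1_S$, which equals $1_S$ in $B^\bullet=S$; consequently $\epsilon^k$ acts as $1_S$ and all signs disappear. Writing $A=Y=\{i_1<\dots<i_{d+1}\}$ and $a_i:=v_{Y\smallsetminus\{i\}}\,v_{X\cup\{i\}}\in S$ for $i\in A\smallsetminus X$, the relation $\varphi_{X,Y}(v\otimes v)\geq 0$ of \autoref{df Plucker vectors} becomes $0\leq \sum_{i\in A\smallsetminus X} a_i$ in $B$ (the sum being the formal sum in $B^+=\widetilde{S^\bullet}$). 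On the other hand the relation of \autoref{df-S-matroid} reads $\sum_{i\in A\smallsetminus X} a_i=\sum_{i\in A\smallsetminus X,\,i\neq p} a_i$ in $S$ for every $p$, i.e. $a_p\leq_S\sum_{i\neq p}a_i$ for all $p$ (equivalently, the join of the $a_i$ in $S$ is attained by at least two of them). Hence the corollary reduces to the following lemma: for $a_1,\dots,a_m\in S$ one has $0\leq\sum_i a_i$ in $B$ if and only if $\sum_i a_i=\sum_{i\neq p}a_i$ in $S$ for every $p$.

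The direction $(\Leftarrow)$ is immediate: if $\sum_i a_i=\sum_{i\neq p}a_i$ in $S$ for all $p$, then taking $b=0_S$ in the definition of $\Omega_S$ shows that $0\leq\sum_i 1.a_i$ is one of the generators of $\Omega_S$, hence holds in $B$. The direction $(\Rightarrow)$ is the heart of the matter and the step I expect to be the main obstacle, because the preorder on $B=\bigbpgenquot{S^\bullet}{\Omega_S}$ is generated from $\Omega_S$ by reflexivity, transitivity, additivity and multiplicativity, and one must rule out that these closure operations ever produce a relation $0\leq\sum a_i$ with an unbalanced family $\{a_i\}$. I would prove it by bounding $\gen{\Omega_S}$ from above: it suffices to exhibit, for each unbalanced family (say $a_p\not\leq_S\sum_{i\neq p}a_i$ for some $p$), an additive and multiplicative preorder on $\widetilde{S^\bullet}$ that contains $\Omega_S$ and the trivial order but does not contain the pair $(0,\sum_i a_i)$, since $\gen{\Omega_S}$ is the intersection of all such preorders.

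To construct such a separating preorder I would first reduce to the totally ordered case. Since $S\smallsetminus\{0\}$ is a lattice-ordered abelian group, the failure $a_p\not\leq_S\sum_{i\neq p}a_i$ is detected after pushing forward along a morphism $S\to S'$ of idempotent semifields with $S'$ totally ordered, in which the image of $a_p$ strictly dominates the images of the remaining $a_i$; such a morphism induces $S^\mon\to S'^\mon$ and carries $0\leq\sum a_i$ to $0\leq\sum a_i$. For totally ordered $S'$ one then has an explicit description of the order of $S'^\mon$ — it is the ordered-blueprint incarnation of the (tropical) hyperfield attached to $S'$ (cf. \cite{Lorscheid19}), for which $0\leq\sum a_i$ holds precisely when the maximum of the $a_i$ is attained at least twice — and this contradicts the unique strict maximum at $p$. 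This last identification is exactly the content of Baker and Lorscheid's analysis of $S^\mon$; once it is in hand the lemma, and hence the corollary, follows, simultaneously recovering their cryptomorphism in the tropical setting \cite{Baker-Lorscheid21}.
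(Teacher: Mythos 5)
Your first half is right and coincides with what the paper does: since $B^\bullet=S$, $B^\times=S\smallsetminus\{0\}$ and $\epsilon=1.1_S=1$, a rank $d$ $B$-Pl\"ucker vector is exactly a nonzero tuple $(v_I)$ such that $0\leq \sum_k 1.a_k$ holds in $B^+$ for each pair $(X,Y)$, where $a_k=v_{X\cup\{i_k\}}v_{Y\smallsetminus\{i_k\}}$, so everything reduces to your lemma: $0\leq\sum_k 1.a_k$ holds in $\gen{\Omega_S}$ if and only if $a_p\leq\sum_{k\neq p}a_k$ in $S$ for every $p$. Your $(\Leftarrow)$ is correct (the relation is a generator of $\Omega_S$ with $b=0_S$), and this is precisely the one-line reading of $\Omega_S$ that constitutes the paper's proof, with $(\Rightarrow)$ left implicit there.

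The gap is in your $(\Rightarrow)$, and it is not the l-group reduction (abelian lattice-ordered groups are indeed subdirect products of totally ordered ones, and $(-)^\mon$ is functorial, so the pullback of the order on $S'^\mon$ is a legitimate separating preorder). The problem is the final citation, which carries the entire weight of the argument. The objects analyzed in \cite{Lorscheid19} and \cite{Baker-Lorscheid21} are ordered blueprints attached to hyperfields or tracts, generated by the null-sum relations $0\leq\sum 1.a_i$ (or binary hypersum relations). The blueprint $S'^\mon$ of this paper is generated by the strictly larger set $\Omega_{S'}$, which contains relations $1.b\leq\sum 1.a_i$ with $b\neq 0$; for instance $1.a_1\leq 1.a_1+1.a_2$ is a generator whenever $a_2<a_1$. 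Ordered blueprints have no additive cancellation, so these extra generators are not formal consequences of the null ones, and the cited description of the generated preorder therefore gives no upper bound on $\gen{\Omega_{S'}}$: you would still have to prove that the extra generators create no new relations of the form $0\leq\sum 1.a_i$, which is exactly the statement you set out to prove, now for $S'$. So the reduction relocates the difficulty rather than resolving it. The way to close it (and, for arbitrary idempotent $S$, to bypass the reduction entirely) is the direct chain argument: every relation in $\gen{\Omega_S}$ is a finite chain of elementary steps $cp+r\leq cq+r$ with $(p,q)\in\Omega_S$ or $p=q$, and one checks that the property ``every term of the formal sum is $\leq_S$ the sum in $S$ of the remaining terms'' holds for $0$ and is preserved by each step (using that, for a generator, each $a_i$ satisfies $a_i\leq b+\sum_{j\neq i}a_j$ and each $b$-term of the old sum satisfies $b\leq\sum_i a_i$). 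This elementary invariant is what ``due to the description of the relations $\Omega_S$'' in the paper's proof amounts to. One minor inaccuracy to delete: for non-totally-ordered $S$, the condition $a_p\leq\sum_{i\neq p}a_i$ for all $p$ is \emph{not} equivalent to ``the join of the $a_i$ is attained by at least two of them'' (three pairwise incomparable elements can satisfy the former while none attains the join); your argument only ever uses the correct inequality form, so only the parenthetical is wrong.
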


\begin{proof}
Note that, due to the description of the relations $\Omega_S$ on the construction of $S^\mon$, one has that $v=\displaystyle\underset{I\in {[n] \choose d}}{\sum} v_I e_I$, with $v_I \in B^\bullet$, is a $B$-Pl\"ucker vector if and only if $\widetilde{v}=\displaystyle\underset{I\in {[n] \choose d}}{\sum} v_I e_I$, with $v_I \in S$, is a tropical Pl\"ucker vector.
\end{proof}

\begin{cor}
There is a bijection between rank $d$ $B$-matroids and classes of rank $d$ $B$-Pl\"ucker vectors.
\end{cor}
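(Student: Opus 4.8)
The plan is to descend the bijection of \autoref{teo vectors} to equivalence classes on both sides. That theorem already produces a bijection between the set of rank $d$ $B$-Pl\"ucker vectors and the set of Grassmann-Pl\"ucker functions of rank $d$ on $[n]$ with coefficients in $B$, sending $v=\sum_I v_I e_I$ to the function $\Delta_v\colon I\mapsto v_I$. So the only work left is to pass from vectors and functions to their respective classes.

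First I would check that this bijection is $B^\times$-equivariant. By \autoref{df Plucker vectors}, $B^\times$ acts on rank $d$ $B$-Pl\"ucker vectors by $a\cdot v=\sum_I (a v_I)e_I$, and by \autoref{df B-matroid} two Grassmann-Pl\"ucker functions are equivalent exactly when they differ by the action $(a\cdot\Delta)(I)=a\cdot\Delta(I)$ of some $a\in B^\times$. For the assignment $v\mapsto\Delta_v$ one computes directly $\Delta_{a\cdot v}(I)=(a\cdot v)_I=a v_I=a\cdot\Delta_v(I)=(a\cdot\Delta_v)(I)$, hence $\Delta_{a\cdot v}=a\cdot\Delta_v$. (That the $B^\times$-action indeed preserves the set of $B$-Pl\"ucker vectors and the unit condition $v_J\in B^\times$ is already recorded in \autoref{df Plucker vectors}.)

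Granting equivariance, the conclusion is immediate. A class of rank $d$ $B$-Pl\"ucker vectors is by definition a $B^\times$-orbit of $B$-Pl\"ucker vectors, while a rank $d$ $B$-matroid is by definition a $B^\times$-orbit, i.e.\ an equivalence class, of Grassmann-Pl\"ucker functions. An equivariant bijection between two sets carrying a $B^\times$-action induces a bijection between their orbit spaces, so $[v]\mapsto[\Delta_v]$ is the desired bijection between rank $d$ $B$-matroids and classes of rank $d$ $B$-Pl\"ucker vectors.

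I do not anticipate any genuine obstacle. Everything rests on \autoref{teo vectors}, and the only additional input is the one-line equivariance computation above; no fresh analysis of the Pl\"ucker relations is required, since those relations are precisely the content already transported across the bijection of \autoref{teo vectors}.
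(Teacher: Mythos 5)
Your proposal is correct and follows exactly the paper's own route: the paper's proof is the one-liner that one takes $B^\times$-equivalence classes on both sides of the bijection in \autoref{teo vectors}. Your explicit equivariance check $\Delta_{a\cdot v}=a\cdot\Delta_v$ is just a more careful spelling-out of that same step.
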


\begin{proof}
We just need to take the equivalence class (by the action of $B^\times$) on both sides of the bijection presented in \autoref{teo vectors}.
\end{proof}


\bibliographystyle{alpha}
\bibliography{exterior}

\begin{thebibliography}{GG18}

\bibitem[BB19]{Baker-Bowler19}
Matthew Baker and Nathan Bowler.
\newblock
  \href{https://www.sciencedirect.com/science/article/pii/S0001870818304961}{Matroids
  over partial hyperstructures}.
\newblock {\em Advances in Mathematics}, 343:821--863, 2019.

\bibitem[BL21]{Baker-Lorscheid21}
Matthew Baker and Oliver Lorscheid.
\newblock
  \href{https://www.sciencedirect.com/science/article/pii/S0001870821003224}{The
  moduli space of matroids}.
\newblock {\em Advances in Mathematics}, 390:107883, 2021.

\bibitem[GG18]{Giansiracusa18}
Jeffrey Giansiracusa and Noah Giansiracusa.
\newblock \href{https://doi.org/10.1007/s00229-017-0958-z}{A {G}rassmann
  algebra for matroids}.
\newblock {\em manuscripta math.}, 156:187--213, 2018.

\bibitem[Lor18]{Lorscheid18}
Oliver Lorscheid.
\newblock Blueprints and tropical scheme theory.
\newblock Lecture notes,
  \url{https://oliver.impa.br/notes/2018-Blueprints/lecturenotes.pdf}, version
  from May 21, 2018.

\bibitem[Lor22]{Lorscheid19}
Oliver Lorscheid.
\newblock \href{https://doi.org/10.1216/rmj.2022.52.189}{Tropical geometry over
  the tropical hyperfield}.
\newblock {\em Rocky Mountain Journal of Mathematics}, 52(1):189--222, 2022.

\end{thebibliography}

\end{document}